\theoremstyle{plain}
\newtheorem{theorem}{Theorem}[section]
\newtheorem{corollary}[theorem]{Corollary}
\newtheorem{lemma}[theorem]{Lemma}
\theoremstyle{definition}
\newtheorem*{remark}{Remark}
\newtheorem*{remarks}{Remarks}
\renewcommand{\L}{\mathbb{L}}
\newcommand{\Q}{\mathbb{Q}}
\newcommand{\D}{\mathbf{D}}
\newcommand{\K}{\mathbb{K}}
\newcommand{\Z}{\mathbb{Z}}
\newcommand{\cAA}{\mathfrak{A}}
\newcommand{\RR}{\mathcal{R}}
\newcommand{\cPP}{\mathfrak{P}}
\renewcommand{\pmod}[1]{\,\,(\operatorname{mod}#1)}
\let\oldenumerate=\enumerate
	\def\enumerate{
	\oldenumerate
	\setlength{\itemsep}{5pt}
	}
\let\olditemize=\itemize
	\def\itemize{
	\olditemize
	\setlength{\itemsep}{5pt}
	}
\begin{document}

\title[Primes representable by polynomials]{Explicit upper bounds for the number of primes simultaneously representable by any set of irreducible polynomials}

\author[M.~Bordignon]{Matteo Bordignon}
\address{Charles University, Faculty of Mathematics and Physics, Department of Algebra, Sokolovsk 83, 186 00 Praha 8, Czech Republic}
\email{matteobordignon91@gmail.com}

\author[E.~S.~Lee]{Ethan Simpson Lee}
\address{University of Bristol, School of Mathematics, Fry Building, Woodland Road, Bristol, BS8 1UG} 
\email{ethan.lee@bristol.ac.uk}
\urladdr{\url{https://sites.google.com/view/ethansleemath/home}}




\maketitle


\begin{abstract}
    Using an explicit version of Selberg's upper sieve, we obtain explicit upper bounds for the number of $n\leq x$ such that a non-empty set of irreducible polynomials $F_i(n)$ with integer coefficients are simultaneously prime; this set can contain as many polynomials as desired. To demonstrate, we present computations for some irreducible polynomials and obtain an explicit upper bound for the number of Sophie Germain primes up to $x$, which have practical applications in cryptography.
\end{abstract}

\section{Introduction}

Let $F_1, \dots, F_g\in\Z[X]$ be irreducible polynomials with positive leading coefficients, suppose that $F = F_1\cdots F_g$, and let $\rho_F(p)$ denote the number of solutions to the congruence $F(n)\equiv 0\mod{p}$, such that $\rho_F(p) < p$ for all primes $p$. This condition is equivalent to observing that $H$ has no fixed prime divisors. Throughout, $p$ will always denote a rational prime number and the Generalised Riemann Hypothesis is shortened to the GRH.

A central problem in number theory is to understand the distribution of the prime numbers in a set $\mathcal{S} \subseteq \Z$. For example, Maynard considered the set $\mathcal{S}$ of elements defined by linear equations in his thesis \cite{MaynardThesis}. More generally, suppose that $\pi_F(y)$ is the number of integers $n\leq y$ such that $F_i(n)$ is prime for each $1\leq  i\leq  g$; this considers the set $\mathcal{S}$ of integers such that $F_i(n)$ are simultaneously prime. The Bateman--Horn conjecture \cite{BatemanHornA, BatemanHornB} is a generalisation of six conjectures of Hardy and Littlewood \cite{HardyLittlewood23} which suggests that
\begin{equation}\label{eq:BHC}
    \pi_F(y)
    \sim \frac{1}{\prod_{i=1}^g\deg{F_i}} \prod_{p}\left(1-\frac{1}{p}\right)^{-g}\left(1 - \frac{\rho_F(p)}{p}\right) \int_2^y\frac{dt}{\log^g{t}}.
\end{equation}
Note that the prime number theorem for primes in arithmetic progressions is a proven special case of the Bateman--Horn conjecture and a comprehensive account on the Bateman--Horn conjecture has been given by Aletheia \textit{et al.} in \cite{AZFG}. 

The asymptotic behaviour in \eqref{eq:BHC} is difficult to prove, but Halberstam and Richert \cite[Thm.~5.3]{HalberstamRichert} have used the upper Selberg sieve to establish
\begin{equation}\label{eq:RH}
    \pi_F(y) \leq  2^g g! \prod_{p}\left(\frac{p - \rho_F(p)}{p-1}\right)\left(1-\frac{1}{p}\right)^{1-g} \frac{y}{\log^g{y}}\left(1 + O_F\left(\frac{\log\log{3y}}{\log{y}}\right)\right),
\end{equation}
in which the implied constant is independent of $x$ and might depend on $F$. The primary objective of this paper is to prove Theorem \ref{theo:general}, which is an effective version of \eqref{eq:RH}, by making every step in Halberstam and Richert's exposition completely explicit. To this end, we needed to apply explicit ingredients that have been established by the second author in his thesis \cite{LeeThesis} into a general, but completely explicit, version of Selberg's upper sieve, which we prove in Theorem \ref{theo:sieve}.

Once Theorem \ref{theo:general} has been proved, we can establish some applications of it. A particularly interesting special case of the explicit version of \eqref{eq:RH} is when $g=1$, because this would reveal an upper bound for the number of primes representable by an irreducible polynomial. To this end, we prove the following result.

\begin{corollary}\label{thm:Main}
Suppose that 
\begin{equation*}
    F_0(k) = k^2 + 3,\quad
    F_1(k) = k^3 - 5,\quad
    F_2(k) = k^5 + 3,\quad
    \text{and}\quad
    F_3(k) = 2 k^6 + 3.
\end{equation*}
For all $x\geq X_i$, there exists $\tau_i > 0$ such that
\begin{equation*}
    \pi_{F_i}(x) \leq 2 \left( 1+\frac{\tau_i \log\log{x}}{\log{x}}\right)  \prod_p\left( 1-\frac{\rho_{F_i}(p)}{p}\right)\left( 1-\frac{1}{p}\right)^{-1} \frac{x}{\log{x}} .
\end{equation*}
Admissible values for $X_i$ and $\tau_i$ are presented in Table \ref{tab:Main}, including computations that are conditional on the GRH. 
\end{corollary}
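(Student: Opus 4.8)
The plan is to apply Theorem~\ref{theo:general} in the special case $g=1$ to each of the four polynomials $F_i$ individually, since each $F_i$ is irreducible over $\Z$ with positive leading coefficient. The first step is to verify the hypotheses: I would check that each $F_i$ has no fixed prime divisor, i.e.\ that $\rho_{F_i}(p)<p$ for every prime $p$. For $F_0(k)=k^2+3$, $F_2(k)=k^5+3$ this is clear since $\rho_{F_i}(p)\le \deg F_i<p$ for $p>\deg F_i$ and one checks the small primes directly; for $F_1(k)=k^3-5$ and $F_3(k)=2k^6+3$ the same degree bound applies, with the extra remark that for $F_3$ the leading coefficient $2$ forces $\rho_{F_3}(2)=0<2$. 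With $g=1$, the constant $2^g g!$ in \eqref{eq:RH} collapses to $2$, and the Euler product $\prod_p\bigl(\tfrac{p-\rho_{F_i}(p)}{p-1}\bigr)\bigl(1-\tfrac1p\bigr)^{1-g}$ becomes $\prod_p\bigl(1-\tfrac{\rho_{F_i}(p)}{p}\bigr)\bigl(1-\tfrac1p\bigr)^{-1}$, which is exactly the product appearing in the statement; so the shape of the bound is already correct and only the explicit error term needs to be pinned down.

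The second step is the genuinely computational one: for each $i$, I would extract from the explicit version of Theorem~\ref{theo:general} (with all ingredients from \cite{LeeThesis} inserted, and $g=1$) the explicit dependence of the error term on $F_i$ and on $x$, and then optimise. Concretely, the error term in Theorem~\ref{theo:general} will be of the form $1 + (\text{const depending on }F_i)\cdot \tfrac{\log\log x}{\log x}$ valid for $x$ beyond some threshold determined by $F_i$; the constant will depend on things like $\deg F_i$, the discriminant or height of $F_i$, the set of ramified primes, and convergence rates of the relevant Euler products. For each polynomial I would compute a valid pair $(X_i,\tau_i)$ by bounding these quantities numerically — this involves estimating tails of products over primes $p\le \deg F_i$ and bounding $\rho_{F_i}(p)$ via the factorisation type of $p$ in the ring $\Z[k]/(F_i)$, which is where the GRH-conditional entries in Table~\ref{tab:Main} come in: under GRH one gets sharper effective Chebotarev-type control on the distribution of primes with $\rho_{F_i}(p)$ in a given residue class, hence smaller $\tau_i$ and/or smaller $X_i$. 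The output of this step is precisely the table.

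The main obstacle I anticipate is not conceptual but bookkeeping: faithfully tracking the $F_i$-dependence through every step of the explicit Selberg sieve of Theorem~\ref{theo:sieve} and of Theorem~\ref{theo:general}, so that the final constant is both correct and small enough to be presentable. In particular, controlling the error contributions that involve sums of the multiplicative function built from $\rho_{F_i}(p)/p$ over the sieve range, and balancing the choice of sieve level $z$ (as a function of $x$) to produce the clean $\tfrac{\log\log x}{\log x}$ secondary term rather than something larger, will require care. A secondary, purely numerical obstacle is evaluating the Euler products $\prod_p(1-\rho_{F_i}(p)/p)(1-1/p)^{-1}$ (and the auxiliary products in the error constant) to sufficient precision; this is standard but for higher-degree $F_i$ such as $F_2$ and $F_3$ the slow convergence means one must split off a large initial segment of primes and bound the tail rigorously. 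Once these are handled, Corollary~\ref{thm:Main} follows immediately by specialising Theorem~\ref{theo:general} and reading off the optimised constants, and the Sophie Germain application (the pair $F(k)=k$, $F(k)=2k+1$, which is the $g=2$ case) is handled analogously but with the $2^2\cdot 2!=8$ constant and the two-dimensional singular series.
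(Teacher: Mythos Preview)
Your approach is essentially the paper's: specialise Theorem~\ref{theo:general} with $g=1$ to each $F_i$, then compute the resulting constants. Two clarifications are worth making. First, the Euler product $\prod_p(1-\rho_{F_i}(p)/p)(1-1/p)^{-1}$ is never evaluated numerically --- it sits in the final bound as written --- so the ``secondary numerical obstacle'' you anticipate does not arise; the constants $X_i,\tau_i$ depend only on $\deg F_i$ and on $\L_{F_i}$, which is computed from the weighted discriminant $\D_{F_i}$ via Theorem~\ref{thm:NagellTypeSums} (and this is exactly where the GRH enters, through a smaller admissible $Q_{F_i}$, rather than through any direct Chebotarev estimate on the individual $\rho_{F_i}(p)$). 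Second, you have not accounted for the additive term $\max\{n:F_i(n)<\sqrt{y}\}$, of size roughly $y^{1/(2(\deg F_i-1))}$, that Theorem~\ref{theo:general} produces; the paper absorbs it into the main term using the crude Euler-product lower bound~\eqref{eqn:prod_lower}, and that lower bound is the only place any numerical control on the product is actually needed.
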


\begin{table}[]
    \centering
    \begin{tabular}{c|cc|cc}
    \multirow{2}{*}{$i$} & \multicolumn{2}{c|}{$\log{X_i}$} & \multicolumn{2}{c}{$\log{\tau_i}$} \\
    & unconditionally & under GRH  & unconditionally & under GRH  \\
    \hline
    $0$ & $1.5\cdot 10^{38}$ & $5.5\cdot 10^{7}$ & $2.41\cdot 10^{25}$ & $1.29\cdot 10^{5}$ \\
    $1$ & $1.8\cdot 10^{50}$ & $5.7\cdot 10^{8}$ & $3.01\cdot 10^{33}$ & $6.73\cdot 10^{5}$ \\
    $2$ & $6.1\cdot 10^{75}$ & $6.5\cdot 10^{9}$ & $3.70\cdot 10^{50}$ & $3.88\cdot 10^{6}$ \\
    $3$ & $2.8\cdot 10^{109}$ & $9.3\cdot 10^{10}$ & $1.07\cdot 10^{73}$ & $2.41\cdot 10^{7}$ \\
    \end{tabular}
    \caption{Admissible values for $X_i$ and $\tau_i$ in Corollary \ref{thm:Main}, which correspond to the irreducible polynomials $F_i$.}
    \label{tab:Main}
\end{table}

We used \texttt{Python} to determine the values in Table \ref{tab:Main}. The code we used to determine these values is now available on \href{https://github.com/EthanSLee/PrimesRepresentableByPolynomials/blob/main/IrreduciblePolynomialTool.py}{\texttt{GitHub}}, so that the motivated reader can run our computations and determine admissible values for $X_i$ and $\tau_i$ under \textit{any} choice of irreducible $F\in\Z[X]$. Now, the computations in Table \ref{tab:Main} clearly indicate that irreducible polynomials of larger degree will require shorter ranges for $x$ and larger constants $\tau_i$. 

Next, the results of this paper enable one to obtain explicit results for \textit{any} system of $g\geq 1$ irreducible polynomials. The only consideration to keep in mind, is that we are limited by technology, because some computations will be too computationally expensive. To demonstrate how our results may be used to obtain explicit bounds for a more general system of polynomials, we consider another interesting special case of \eqref{eq:RH}. That is, if $g=2$, then we can obtain an upper bound for the number of Sophie Germain primes, where a prime $p$ is \textit{Sophie Germain} if $2p+1$ is also prime; these are also known as \textit{safe} primes in cryptography. Suppose that $H(k) = k(2k + 1)$, so that
\begin{equation*}
    \pi_H(x) = \#\{p \leq x : p~\text{and}~ 2p+1 ~\text{are prime}\} 
\end{equation*}
is the number of Sophie Germain primes up to $x$. There is a heuristic conjecture that is a special case of Dickson's conjecture (see \cite[Conj.~5.26]{Shoup}), which states
\begin{equation*}
    \pi_H(x) \sim 2 \prod_{p>2}\left(1-\frac{1}{(p-1)^2} \right)\frac{x}{\log^2{x}}.
\end{equation*}
Note that this observation is also a special case of the wider Bateman--Horn conjecture \eqref{eq:BHC}. In the end, we prove the following unconditional result, which is close to the expected bound.

\begin{corollary}\label{thm:Main1}
If $\log{x} \geq 1.3\cdot 10^{6}$, then
\begin{equation*}
    \pi_H(x) \leq 16 \left(1 + \frac{e^{9.04\cdot 10^{3}} \log\log{x}}{\log{x}}\right) \prod_{p>2} \left( 1-\frac{1}{(p-1)^2}\right) \frac{x}{\log^2{x}} .
\end{equation*}
\end{corollary}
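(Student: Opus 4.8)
The plan is to obtain Corollary \ref{thm:Main1} as the case $g = 2$ of Theorem \ref{theo:general}, applied to the linear polynomials $F_1(k) = k$ and $F_2(k) = 2k+1$, whose product is $H$. First I would check the hypotheses. Both $F_1$ and $F_2$ are irreducible in $\Z[X]$ with positive leading coefficients, and $H$ has no fixed prime divisor: writing $H(k) = 2k^2 + k$ and reducing modulo $2$ gives $H(k) \equiv k \pmod 2$, so $\rho_H(2) = 1$, while for odd $p$ the congruence $k(2k+1)\equiv 0 \pmod p$ has exactly the two distinct roots $k \equiv 0$ and $k \equiv -2^{-1} \pmod p$, so $\rho_H(p) = 2$. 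In particular $\rho_H(p) < p$ for every $p$.

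Next I would simplify the product that Theorem \ref{theo:general} attaches to $H$ when $g = 2$, namely $\prod_p \frac{p-\rho_H(p)}{p-1}\left(1-\frac1p\right)^{-1}$. The factor at $p = 2$ is $\frac{2-1}{2-1}\cdot\left(1-\frac12\right)^{-1} = 2$, and for odd $p$ the factor is $\frac{p-2}{p-1}\cdot\frac{p}{p-1} = \frac{(p-1)^2-1}{(p-1)^2} = 1 - \frac{1}{(p-1)^2}$; hence the product equals $2\prod_{p>2}\left(1 - \frac{1}{(p-1)^2}\right)$. Multiplying by the constant $2^g g! = 2^2\cdot 2! = 8$ supplied by Theorem \ref{theo:general} produces precisely the leading factor $16$ and the Euler product in the statement.

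What remains is bookkeeping on the error term. Substituting $g = 2$ into Theorem \ref{theo:general} gives, for $x$ beyond some explicit threshold, a bound of the shape $\pi_H(x) \leq 16\left(1 + \tau_H\,\frac{\log\log 3x}{\log x}\right)\prod_{p>2}\left(1-\frac{1}{(p-1)^2}\right)\frac{x}{\log^2 x}$, where $\tau_H$ and the threshold are determined by plugging the data of $H$ (its degree, leading coefficient, discriminant, and the values $\rho_H(p)$) into the explicit estimates underlying Theorem \ref{theo:general}. For $\log x$ as large as we shall need I would replace $\log\log 3x$ by $\log\log x$, at the cost of an additive term of size $\ll \log 3/(\log x\,\log\log x)$ that is absorbed into $\tau_H$. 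Running the \texttt{Python} implementation already used for Corollary \ref{thm:Main} --- which optimises the sieve level against the explicit remainder term and the truncation error of the Euler product --- then returns the admissible pair $\log x \geq 1.3\cdot 10^6$ and $\tau_H = e^{9.04\cdot 10^3}$.

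The genuine obstacle is this last, computational step. Because $g = 2$, the remainder in Selberg's sieve is weighted by the divisor function and summed over moduli up to roughly the square of the sieve level, and the passage from the truncated sieve sum to the infinite Euler product contributes a $\log\log$-sized loss controlled by the small set of primes attached to $H$ (those dividing its discriminant and leading coefficient); balancing these against each other is what forces both the large constant $\tau_H$ and the fairly restrictive range $\log x \geq 1.3\cdot 10^6$, and some care is needed to verify that the chosen sieve level is admissible and that each numerical input to the \texttt{Python} tool is itself rigorously justified.
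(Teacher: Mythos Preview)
Your approach is correct and essentially identical to the paper's: the paper applies Theorem~\ref{theo:generalp} with $g=1$ and $F(k)=2k+1$, but since Theorem~\ref{theo:generalp} is nothing more than Theorem~\ref{theo:general} specialised to the situation where one irreducible factor is $k$, the two routes feed exactly the same data ($\kappa=2$, $A_1=3$, $\omega(p)=\rho_H(p)$, $A_2=L=\L_H$, sieve level $\sqrt{x}/(\log x)^{9/2}$) into Theorem~\ref{theo:sieve}. One small inaccuracy: Theorem~\ref{theo:general} already delivers the error in terms of $\log\log x$, not $\log\log 3x$, so no replacement step is needed; and you should mention explicitly that the trailing term $\max\{n:F_i(n)<\sqrt{x}\}\le\sqrt{x}$ is absorbed using the lower bound \eqref{eqn:prod_lower} on the Euler product, as the paper does.
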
 

\begin{remarks}
The products over primes in our results will converge and the function $\rho_{F}(p)$ can be analysed further on a case-by-case basis. In particular, Wrench computed that the product in Corollary \ref{thm:Main1} is equal to $0.6601618158\ldots$ in \cite{Wrench}; his computations are precise up to $42$ decimal places. Moreover, we can note that 
\begin{equation*}
    \rho_{F_0}(p) =
    \begin{cases}
        1 & \text{if }p\in\{2,3\},\\
        1+\left(\frac{-3}{p}\right) &\text{if }p>3,
    \end{cases}
\end{equation*}
where $\left(\tfrac{\cdot}{\cdot}\right)$ is the Legendre symbol. Finally, write $f = O^*(g)$ to mean $|f| \leq g$ throughout.
\end{remarks}

\subsection*{Structure}
In Section \ref{S:Selb} we prove a completely explicit version of Selberg's upper sieve, by following the work of Halberstam and Richert in \cite{HalberstamRichert}. In Section \ref{S:app}, we make choices in this sieve and apply the explicit ingredients from \cite{LeeThesis} to yield Theorems \ref{theo:general}-\ref{theo:generalp}. Finally, in Section \ref{S:Results}, we apply these results and obtain the computations presented in Corollaries \ref{thm:Main} and \ref{thm:Main1}.

\subsection*{Acknowledgements}
We would like to thank Olivier Ramar\'{e} for suggesting this project. Lee would like to thank the University of New South Wales and the Heilbronn Institute for Mathematical Research for their support. Bordignon was supported in his research by OP RDE
project No.CZ.02.2.69/0.0/0.0/18\_053/0016976 International mobility of research, technical and administrative staff at the Charles University.


\section{Selberg's upper sieve}\label{S:Selb}

In this section, we introduce Selberg's upper sieve from \cite{HalberstamRichert} and make every aspect of this sieve explicit. That is, all implied constants will be described using constants.

\subsection{Notation and initial result}

Suppose $\cAA = (a_n)$ is a finite sequence of non-negative real numbers $a_n$, $\cPP$ is a set of primes, and $\cPP^C$ is the complement of $\cPP$ in the set of all primes. The objective of sieve theory is to obtain bounds for the sifted set
\begin{equation*}
    S(\cAA, \cPP, z) = \#\{a\in\cAA : (a,P(z))=1\}
    \quad\text{such that}\quad
    P(z) = \prod_{\substack{p\in\cPP\\p\leq  z}} p .
\end{equation*}
The set $\cAA_d$ contains all the elements of $\cAA$ which are divisible by $d$. For square-free $d$, let $\omega$ be a multiplicative function which approximates $\#\cAA_d$ by the relationship
\begin{equation*}
    \#\cAA_d = \frac{\omega(d)}{d}X + \RR_d,
\end{equation*}
in which $X > 0$ and $\RR_d$ is a remainder term. Moreover, let
\begin{equation*}
    W(z) = \prod_{p<z}\left(1-\frac{\omega(p)}{p}\right),\quad
    g(d) = \frac{\omega(d)}{d}\prod_{p|d} \left(1-\frac{\omega(p)}{p}\right)^{-1},
    \quad\text{and}\quad
    G(z) = \sum_{d < z}\mu^2(d)g(d).
\end{equation*}
Finally, the following conditions for square-free $d$ such that $(d, \cPP^C)=1$ and prime numbers $p$ will be important. For constants $\kappa \geq 1$, $A_1 > 1$, and $A_2 > 0$, these are:
\begin{align}
    \#\RR_{d} &\leq  \omega(d),\tag{$R$}\label{eq:R}\\
    0 &\leq  \frac{\omega(p)}{p} \leq  1 - \frac{1}{A_1}, \tag{$\Omega_1$}\label{eq:omega1}\\
    -L &\leq  \sum_{w\leq  p<z}\frac{\omega(p)\log{p}}{p} - \kappa \log{\frac{z}{w}} \leq  A_2.\tag{$\Omega_2(\kappa)$}\label{eq:omega2}
\end{align}
The condition \eqref{eq:omega1} ensures that $g(d)$ is well-defined and satisfies
\begin{equation}\label{eq:gp_bounds}
    \frac{\omega(p)}{p} \leq  g(p) \leq  A_1 \frac{\omega(p)}{p}.
\end{equation}

Suppose that $\lambda_1 = 1$ and $\lambda_d$ for $d\geq 2$ is a sequence of arbitrary real numbers. Selberg's upper sieve is derived from the inequality
\begin{equation*}
    S(\cAA,\cPP,z) \leq  \sum_{a\in\cAA} \bigg(\sum_{d|(a,P(z))}\lambda_d\bigg)^2.
\end{equation*}
Following Halberstam and Richert \cite[Thm.~3.1, Thm.~3.2]{HalberstamRichert}, one may obtain the following theorem, assuming the preceding knowledge and notation.

\begin{theorem}[Selberg's upper sieve]\label{thm:SelbergUpperSieve}
Suppose that conditions \eqref{eq:R} and \eqref{eq:omega1} hold, then 
\begin{equation*}
    S(\cAA,\cPP,z) \leq  \frac{X}{G(z)} + \frac{z^2}{W^3(z)}.
\end{equation*}
\end{theorem}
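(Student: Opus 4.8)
The plan is to follow the classical Selberg sieve construction, optimizing the quadratic form $\sum_{a\in\cAA}\left(\sum_{d\mid(a,P(z))}\lambda_d\right)^2$ subject to the constraint $\lambda_1=1$, while keeping careful track of every error term so that the bound is genuinely explicit. First I would expand the square and interchange summations to write the main term as $\sum_{d_1,d_2\mid P(z)}\lambda_{d_1}\lambda_{d_2}\#\cAA_{[d_1,d_2]}$. Substituting $\#\cAA_d = \frac{\omega(d)}{d}X + \RR_d$ splits this into a main piece $X\sum_{d_1,d_2}\lambda_{d_1}\lambda_{d_2}\frac{\omega([d_1,d_2])}{[d_1,d_2]}$ and a remainder piece involving $\RR_{[d_1,d_2]}$. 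The standard diagonalization of the main piece, via the substitution $y_m = \mu(m)\frac{\omega(m)}{m}\sum_{d:\, m\mid d}\lambda_d$ (or rather its inverse), rewrites the quadratic form as $\sum_{m\leq z}\frac{1}{g(m)}y_m^2$ with a linear constraint coming from $\lambda_1=1$; minimizing this diagonal form by Cauchy--Schwarz (Lagrange multipliers) yields the optimal choice and the value $1/G(z)$ for the main term, so that $X\cdot(\text{main form})\le X/G(z)$.

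The second step is to bound the remainder term $\sum_{d_1,d_2\mid P(z)}\lambda_{d_1}\lambda_{d_2}\RR_{[d_1,d_2]}$. Using condition \eqref{eq:R} in the form $|\RR_d|\le\omega(d)$, this is at most $\sum_{d_1,d_2}|\lambda_{d_1}||\lambda_{d_2}|\,\omega([d_1,d_2])$. Here one invokes the standard estimates on the optimal $\lambda_d$ produced by the diagonalization — namely $|\lambda_d|\le 1$ for all $d$, which follows from the explicit formula for $\lambda_d$ in terms of the $g$-weighted sums and the nonnegativity of $g$ — together with multiplicativity to replace $\omega([d_1,d_2])$ by a product over primes, and then factor the double sum. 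The upshot should be a bound of the shape $\left(\sum_{d<z}\mu^2(d)\,3^{\omega(d)}\cdots\right)^2$-type expression that one then controls by $z^2/W^3(z)$; the cube of $W$ appears naturally because each of the relevant auxiliary sums over $d<z$ contributes a factor controlled by $1/W(z)$ (via $\sum_{d<z}\mu^2(d)\frac{\omega(d)}{d}\le \prod_{p<z}(1+\tfrac{\omega(p)}{p})\le W(z)^{-1}$ and similar), while the overall size $z^2$ comes from summing over pairs $d_1,d_2<z$. Combining the two pieces gives $S(\cAA,\cPP,z)\le X/G(z) + z^2/W^3(z)$.

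The main obstacle, and the part requiring the most care, is the remainder estimate: obtaining the clean bound $z^2/W^3(z)$ rather than something with extra constants or logarithmic factors demands that the intermediate inequalities for $\sum|\lambda_{d_1}||\lambda_{d_2}|\omega([d_1,d_2])$ be carried out sharply, exploiting $\lambda_1=1$ and the precise combinatorial identities relating $\lambda_d$, $g$, and $\omega$ rather than crude term-by-term bounds. Condition \eqref{eq:omega1} is what guarantees $g(p)$ is well-defined with the two-sided control \eqref{eq:gp_bounds}, which is needed to ensure the denominators $g(m)$ appearing in the diagonal form (and hence in the explicit $\lambda_d$) do not blow up; this is exactly why only \eqref{eq:R} and \eqref{eq:omega1} — and not \eqref{eq:omega2} — are hypotheses here. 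Since Halberstam and Richert's Theorems 3.1 and 3.2 already carry out this argument, the work is really bookkeeping: reproducing their chain of inequalities while pinning down each implied constant, and verifying that no step secretly used more than \eqref{eq:R} and \eqref{eq:omega1}.
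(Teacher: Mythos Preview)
Your proposal is correct and matches the paper's approach: the paper does not actually prove this theorem but simply cites Halberstam--Richert \cite[Thm.~3.1, Thm.~3.2]{HalberstamRichert}, and your outline is precisely the classical Selberg sieve argument carried out there. One minor imprecision worth noting: the remainder bound is not a squared sum but rather a single sum $\sum_{d\mid P(z),\,d<z^2}3^{\nu(d)}|\RR_d|$ (coming from $|\lambda_d|\le 1$ and counting pairs $(d_1,d_2)$ with $[d_1,d_2]=d$), which under \eqref{eq:R} becomes $\sum_{d<z^2}3^{\nu(d)}\omega(d)\le z^2\prod_{p<z}\bigl(1+3\omega(p)/p\bigr)\le z^2/W^3(z)$ via the elementary inequality $(1+3x)(1-x)^3\le 1$; this is where the cube of $W$ and the $z^2$ genuinely arise.
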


\subsection{Technical preparations}

Here, we establish some technical results, which will be needed to find explicit bounds for $G(z)$ later. First, we require \cite[Thm.~7]{Rosser}: 
\begin{equation}\label{eq:4}
    V(z) = \prod_{p<z}\left(1-\frac{1}{p}\right)=\frac{e^{-\gamma}}{\log z}\left( 1+O^*\left( \frac{1}{\log^2 z}\right)\right). 
\end{equation}
It also follows from \eqref{eq:omega2} that
\begin{equation}\label{eq:5}
    -\frac{L}{\log w}
    \leq  \sum_{w\leq  p <z}\frac{\omega(p)}{p} - \kappa\log\frac{\log z}{\log w}
    \leq  \frac{A_2}{\log{w}};
\end{equation}
see the proof of \cite[(5.2.1)]{HalberstamRichert} for more information. Next, the following result is an explicit version of \cite[(5.2.2)]{HalberstamRichert}. 

\begin{lemma}\label{lemm:exp1}
If $2\leq w \leq z$, then we have
\begin{align*}
    \sum_{w \leq p \leq z}\frac{g(p)}{p^s}- \kappa\sum_{w \leq p \leq z}\frac{1}{p^{s+1}} = 
     O^*\left(\max\left\{\frac{A_2}{\log w}+\frac{A_1A_2}{\log w}\left(\kappa+ \frac{A_2}{\log w}\right), \frac{L}{\log w}\right\}+\frac{3\kappa}{2\log^2 w}\right),
\end{align*}
uniformly in $s\geq 0$.
\end{lemma}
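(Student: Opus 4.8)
The plan is to estimate the difference $\sum_{w\le p\le z} g(p)/p^s - \kappa\sum_{w\le p\le z} 1/p^{s+1}$ by first reducing to the case $s=0$ via partial summation, and then controlling the $s=0$ sum using the input estimate \eqref{eq:5}. First I would use \eqref{eq:gp_bounds} to write $g(p) = \omega(p)/p + E_p$ where $0 \le E_p \le (A_1-1)\omega(p)/p$, so that
\begin{equation*}
    \sum_{w\le p\le z}\frac{g(p)}{p^s} - \kappa\sum_{w\le p\le z}\frac{1}{p^{s+1}}
    = \left(\sum_{w\le p\le z}\frac{\omega(p)}{p}\cdot\frac{1}{p^s} - \kappa\sum_{w\le p\le z}\frac{1}{p^{s+1}}\right) + \sum_{w\le p\le z}\frac{E_p}{p^s}.
\end{equation*}
The error sum is easy: since $E_p \le (A_1-1)\omega(p)/p$ and $p^{-s}\le 1$, it is bounded by $(A_1-1)\sum_{w\le p\le z}\omega(p)/p$, and \eqref{eq:5} gives $\sum_{w\le p\le z}\omega(p)/p \le \kappa\log\frac{\log z}{\log w} + \frac{A_2}{\log w}$; a convexity/monotonicity bound of the shape $\log\frac{\log z}{\log w}\le \frac{1}{\log w}(\text{something})$ is not available in general, so instead I expect the intended route packages the $A_1$-dependent piece differently — the statement's first max-term $\frac{A_1A_2}{\log w}(\kappa + \frac{A_2}{\log w})$ suggests one applies \eqref{eq:omega2} rather than \eqref{eq:5} to this tail, extracting $\kappa\log(z/w)$ together with the $A_2/\log w$ and then noting the $\log(z/w)$ contribution cancels against the main $\kappa$-sum after the partial summation is set up correctly.

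The main term $\sum_{w\le p\le z}\omega(p)/p\cdot p^{-s} - \kappa\sum_{w\le p\le z}p^{-s-1}$ is handled by Abel summation against the weight $t\mapsto t^{-s}$. Writing $T(t) = \sum_{w\le p\le t}\left(\frac{\omega(p)}{p} - \frac{\kappa}{p}\right)$ — no, more precisely I would set $T(t) = \sum_{w\le p\le t}\frac{\omega(p)}{p} - \kappa\log\frac{\log t}{\log w}$, which by \eqref{eq:5} satisfies $-\frac{L}{\log w}\le T(t)\le \frac{A_2}{\log w}$ uniformly for $w\le t\le z$, and separately $\sum_{w\le p\le t}\frac{\kappa}{p} = \kappa\log\frac{\log t}{\log w} + O^*(\text{small})$ by Mertens with the explicit error, where the ``small'' term contributes the $\frac{3\kappa}{2\log^2 w}$ in the conclusion (this is where a clean explicit form of Mertens' second theorem, e.g. from Rosser–Schoenfeld, enters, and is the source of the constant $3/2$). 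Then
\begin{equation*}
    \sum_{w\le p\le z}\frac{\omega(p)/p - \kappa/p}{p^s}
    = \frac{T(z) + (\text{Mertens error at }z)}{z^s} + s\int_w^z \frac{T(t) + (\text{Mertens error at }t)}{t^{s+1}}\,dt,
\end{equation*}
and since $s\int_w^z t^{-s-1}\,dt = w^{-s} - z^{-s} \le 1$ for $s\ge 0$, both the boundary and the integral terms inherit the bound $\max\{L/\log w,\ A_2/\log w\}$ from $T$ and the bound $3\kappa/(2\log^2 w)$ from the Mertens error, uniformly in $s\ge 0$. That uniformity — the fact that the factor $s\int_w^z t^{-s-1}dt$ never exceeds $1$ regardless of $s$ — is the mechanism that makes the ``uniformly in $s\ge 0$'' claim work and should be flagged explicitly.

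The hard part, and the place where I would be most careful, is reconciling the $A_1$-dependent error $\sum_{w\le p\le z} E_p/p^s$ with the precise shape $\frac{A_1A_2}{\log w}\bigl(\kappa + \frac{A_2}{\log w}\bigr)$ inside the max: one does not simply get $\kappa\log\frac{\log z}{\log w}$ (unbounded as $z\to\infty$) out front, so the bound must come from applying \eqref{eq:gp_bounds} in the multiplicative form $g(p)\le A_1\,\omega(p)/p$ and then invoking \eqref{eq:omega2} for the weighted sum $\sum\frac{\omega(p)\log p}{p}$ after a further partial summation that trades $\log p$ for $\log w$ in the denominator — roughly, $\sum_{w\le p\le z}\frac{\omega(p)}{p}p^{-s} \le \frac{1}{\log w}\sum_{w\le p\le z}\frac{\omega(p)\log p}{p}p^{-s}$, and the inner sum is controlled by \eqref{eq:omega2}'s upper bound $\kappa\log(z/w) + A_2 \le \kappa\log w\cdot(\text{stuff}) + A_2$ only after one more Abel step; keeping track of which $\log$'s survive in numerator versus denominator is the bookkeeping that produces the stated $\kappa + A_2/\log w$ factor. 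I would organize the write-up so that the $s$-uniformity is isolated into a single ``$s\int_w^z t^{-s-1}dt \le 1$'' lemma-style remark, and everything else reduces to the $s=0$ estimates already essentially contained in \eqref{eq:5} and explicit Mertens.
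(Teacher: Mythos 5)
Your overall architecture is the same as the paper's: establish two-sided bounds at $s=0$, using \eqref{eq:5} (or \eqref{eq:omega2}) together with an explicit Mertens-type estimate for $\sum 1/p$, and then lift to general $s\geq 0$ by partial summation. Your observation that the factor $s\int_w^z t^{-s-1}\,dt = w^{-s}-z^{-s}\leq 1$ is what secures uniformity in $s$ is exactly right and is the content of the paper's terse ``by partial summation.'' Your identification of the $\tfrac{3\kappa}{2\log^2 w}$ term with the Rosser--Schoenfeld error in Mertens' second theorem is also correct.

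The genuine gap is in the control of the $A_1$-dependent error $\sum_{w\leq p<z} E_p$, where $E_p = g(p) - \omega(p)/p$, and you flag it yourself. Your bound $E_p \leq (A_1-1)\,\omega(p)/p$, drawn from \eqref{eq:gp_bounds}, is too weak: it leads to $(A_1-1)\sum_{w\leq p<z}\omega(p)/p$, which grows like $\kappa\log\tfrac{\log z}{\log w}$ and cannot be absorbed into the stated bound. Your speculated rescue --- writing $\sum\omega(p)/p\leq \tfrac{1}{\log w}\sum\omega(p)\log p/p$ and appealing to \eqref{eq:omega2} --- does not close either, because the right side is $\tfrac{1}{\log w}(\kappa\log(z/w)+A_2)$, still unbounded in $z$. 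The missing ingredient is a \emph{quadratic} bound on $E_p$: from the definition of $g$ and \eqref{eq:omega1},
\begin{equation*}
    E_p \;=\; g(p) - \frac{\omega(p)}{p} \;=\; \frac{(\omega(p)/p)^2}{1-\omega(p)/p} \;\leq\; A_1\left(\frac{\omega(p)}{p}\right)^2,
\end{equation*}
and $\sum_{w\leq p<z}(\omega(p)/p)^2$ \emph{is} bounded: using $\omega(p)\log p/p\leq A_2$ (Halberstam--Richert (2.3.8)) and $\sum_{w\leq p<z}\tfrac{\omega(p)}{p\log p}\leq \tfrac{1}{\log w}\bigl(\kappa+\tfrac{A_2}{\log w}\bigr)$ (their (2.3.7)), one gets $\sum (\omega(p)/p)^2\leq \tfrac{A_2}{\log w}\bigl(\kappa+\tfrac{A_2}{\log w}\bigr)$, hence $\sum E_p\leq \tfrac{A_1A_2}{\log w}\bigl(\kappa+\tfrac{A_2}{\log w}\bigr)$, which is precisely the stated shape. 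The paper sidesteps this computation entirely by citing Halberstam--Richert (2.3.11) as a black box for the upper bound on $\sum g(p)$; that inequality is exactly the packaged version of what you were trying to rederive.
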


\begin{proof}
Begin by using \cite[(2.3.11)]{HalberstamRichert}, which states
\begin{align*}
    \sum_{w \leq p <z}g(p)\leq \kappa \log\frac{\log z}{\log w}+\frac{A_2}{\log w}+\frac{A_1A_2}{\log w}\left(\kappa + \frac{A_2}{\log w}\right),
\end{align*} 
and \cite[Thm.~5]{Rosser} to obtain
\begin{equation}\label{eq:upper}
    \sum_{w \leq p \leq z}g(p)- \kappa\sum_{w \leq p \leq z}\frac{1}{p} \leq \frac{A_2}{\log w}+\frac{A_1A_2}{\log w}\left(\kappa + \frac{A_2}{\log w}\right)+\frac{3\kappa}{2\log^2{w}}.
\end{equation}
So, combine \eqref{eq:5} with the lower bound in \eqref{eq:gp_bounds} and \cite[Thm.~5]{Rosser} to obtain
\begin{equation}\label{eq:lower}
    -\left(\frac{L}{\log w} + \frac{3\kappa}{2\log^2 w}\right)
    \leq  \sum_{w\leq p <z}\frac{\omega(p)}{p} - \kappa \sum_{w \leq p \leq z}\frac{1}{p}
    \leq  \sum_{w \leq p \leq z}g(p)- \kappa\sum_{w \leq p \leq z}\frac{1}{p}.
\end{equation}
The result now follows from \eqref{eq:upper} and \eqref{eq:lower} by partial summation.
\end{proof}

The next lemma is an intermediate observation that is required to prove another lemma.

\begin{lemma}
\label{lemm:exp2}
If $2\leq w < z$ and $k\geq 2$, then we have
\begin{equation}\label{eq:2}
    \sum_{w\leq p <z}g^k(p)\leq \frac{A_1^k A_2^{k-1}}{(\log{w})^{k-1}} \left(\kappa +\frac{A_2}{\log w}\right).
\end{equation}
\end{lemma}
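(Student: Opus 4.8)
The plan is to bound $\sum_{w\leq p<z} g^k(p)$ by factoring out $k-1$ copies of $g(p)$ and using the pointwise bound on $g(p)$ coming from \eqref{eq:gp_bounds}, then applying Lemma \ref{lemm:exp1} (or rather the intermediate inequality \eqref{eq:upper} inside its proof) to the one remaining copy of $g(p)$. Concretely, first I would observe that \eqref{eq:omega1} together with \eqref{eq:gp_bounds} gives $g(p) \leq A_1\frac{\omega(p)}{p} \leq A_1$, but this crude bound alone is not enough; instead, for the terms $p \geq w$ I want to exploit that $\frac{\omega(p)}{p}$ is small on average. The cleaner route: write $g^k(p) = g^{k-1}(p)\cdot g(p)$ and bound $g^{k-1}(p) \leq \left(A_1 \frac{\omega(p)}{p}\right)^{k-1} \leq \frac{A_1^{k-1}}{(\log w)^{k-1}}\cdot(\text{something})$? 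That does not immediately work termwise either, so the right idea is to pull out a factor $(A_1 A_2/\log w)^{k-1}$ using a summed estimate.

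The key step is the following. From \eqref{eq:gp_bounds} we have $g(p) \leq A_1\frac{\omega(p)}{p}$, hence for $k\geq 2$,
\begin{equation*}
    \sum_{w\leq p<z} g^k(p) \leq \left(\max_{w\leq p<z} g(p)\right)^{k-1}\sum_{w\leq p<z} g(p).
\end{equation*}
For the sum on the right I would invoke \cite[(2.3.11)]{HalberstamRichert} (quoted inside the proof of Lemma \ref{lemm:exp1}), which yields
\begin{equation*}
    \sum_{w\leq p<z} g(p) \leq \kappa\log\frac{\log z}{\log w} + \frac{A_2}{\log w} + \frac{A_1 A_2}{\log w}\left(\kappa + \frac{A_2}{\log w}\right),
\end{equation*}
and then I need to control the maximum. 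Here is where care is needed: a single prime $p$ near $w$ could a priori make $g(p)$ as large as $A_1 - A_1/A_1 \cdot(\dots)$; the honest bound is $g(p)\leq A_1\frac{\omega(p)}{p}$, and from \eqref{eq:omega2} with the interval shrunk to $[w, w+\varepsilon)$ one extracts $\frac{\omega(p)\log p}{p} \leq A_2$, so $\frac{\omega(p)}{p}\leq \frac{A_2}{\log p}\leq \frac{A_2}{\log w}$, giving $g(p)\leq \frac{A_1 A_2}{\log w}$ for every $p\geq w$. Raising this to the $(k-1)$-th power and combining with a simplified version of the bound on $\sum g(p)$ — where one absorbs the $\kappa\log\frac{\log z}{\log w}$ term and the lower-order $1/\log w$ terms into the shape $\kappa + \frac{A_2}{\log w}$ using $A_1\geq 1$ — produces exactly $\frac{A_1^k A_2^{k-1}}{(\log w)^{k-1}}\left(\kappa + \frac{A_2}{\log w}\right)$.

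The main obstacle I anticipate is the bookkeeping that collapses the three-term bound on $\sum_{w\leq p<z}g(p)$ into the single clean factor $A_1\left(\kappa + \frac{A_2}{\log w}\right)$: one must check that $\kappa\log\frac{\log z}{\log w} + \frac{A_2}{\log w} + \frac{A_1A_2}{\log w}(\kappa + \frac{A_2}{\log w}) \leq A_1(\kappa + \frac{A_2}{\log w})$, which is false in general (the $\log\frac{\log z}{\log w}$ term is unbounded as $z\to\infty$). This signals that the intended argument must instead bound $g^k(p)$ termwise by $g(p)\cdot\left(\frac{A_1 A_2}{\log w}\right)^{k-1}$ and then sum only $\sum_{w\leq p<z} g(p)$ against a version of \eqref{eq:2} with $k=1$ — i.e. one genuinely needs $\sum_{w\leq p<z}g(p) \leq A_1(\kappa + \frac{A_2}{\log w})$, which is \emph{not} what \eqref{eq:upper} gives. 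So the real content is likely a separate direct estimate: bound $\sum_{w\leq p<z} g^k(p) \leq \left(\frac{A_1A_2}{\log w}\right)^{k-1}\sum_{w\leq p<z}\frac{g(p)}{1}$ and then re-derive, via partial summation from \eqref{eq:omega2} restricted to dyadic-type ranges, that the residual single sum is at most $A_1\kappa + \frac{A_1A_2}{\log w}$ — the delicate point being to avoid the logarithmic growth by only ever extracting the \emph{upper} bound $A_2$ from \eqref{eq:omega2} and never the main term. I would expect the write-up to reference \cite[(2.3.11)]{HalberstamRichert} or an analogous inequality that already packages this, so the proof itself should be short: cite the pointwise bound $g(p)\leq \frac{A_1A_2}{\log w}$, pull out $k-1$ copies, and cite the $k=1$ estimate for what remains.
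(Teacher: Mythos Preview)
Your diagnosis of the difficulty is correct, but your proposed fix does not work, and the paper's argument hinges on a trick you do not mention. You end up needing $\sum_{w\leq p<z} g(p) \leq A_1\bigl(\kappa + \tfrac{A_2}{\log w}\bigr)$, and you hope to extract this from \eqref{eq:omega2} by ``never using the main term''. That is impossible: the main term $\kappa\log(z/w)$ is genuinely present, so $\sum_{w\leq p<z} g(p)\geq \sum_{w\leq p<z}\tfrac{\omega(p)}{p}$ diverges as $z\to\infty$. No amount of partial summation or dyadic decomposition will make a divergent sum bounded.

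The paper avoids this by not converting every factor of $1/\log p$ into $1/\log w$. From $g(p)\leq A_1\tfrac{\omega(p)}{p}$ and the pointwise bound $\tfrac{\omega(p)}{p}\leq \tfrac{A_2}{\log p}$ (this is \cite[(2.3.8)]{HalberstamRichert}, exactly what you extract from \eqref{eq:omega2} on a shrinking interval), one gets
\[
\sum_{w\leq p<z} g^k(p)\ \leq\ A_1^k A_2^{k-1}\sum_{w\leq p<z}\frac{\omega(p)}{p(\log p)^{k-1}}\ \leq\ \frac{A_1^k A_2^{k-1}}{(\log w)^{k-2}}\sum_{w\leq p<z}\frac{\omega(p)}{p\log p}.
\]
Only $k-2$ of the $1/\log p$ factors are replaced by $1/\log w$; one factor of $1/\log p$ is kept inside the sum. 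The remaining sum $\sum_{w\leq p<z}\tfrac{\omega(p)}{p\log p}$ \emph{is} bounded uniformly in $z$: by partial summation against \eqref{eq:omega2} (this is \cite[(2.3.7)]{HalberstamRichert}) it is at most $\tfrac{1}{\log w}\bigl(\kappa+\tfrac{A_2}{\log w}\bigr)$. Multiplying out gives the lemma. The single retained $1/\log p$ is what kills the $\log\log z$ growth you were fighting.
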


\begin{proof}
Use the upper bound in \eqref{eq:gp_bounds} to obtain 
\begin{align*}
    \sum_{w\leq p <z}g^k(p)\leq A_1^k \sum_{w\leq p <z}\left( \frac{\omega(p)}{p}\right)^k.
\end{align*}
Next, use \cite[(2.3.8)]{HalberstamRichert}, which tells us that $\frac{\omega(p)\log p}{p}\leq A_2$, to obtain 
\begin{align*}
    \sum_{w\leq p <z}g^k(p)
    \leq  A_1^k A_2^{k-1} \sum_{w\leq p <z} \frac{\omega(p)}{p(\log p)^{k-1}}
    \leq  \frac{A_1^k A_2^{k-1}}{(\log{w})^{k-2}} \sum_{w\leq p <z} \frac{\omega(p)}{p\log p}.
\end{align*}
The result follows using \cite[(2.3.7)]{HalberstamRichert}, which states $\sum_{w\leq p <z} \frac{\omega(p)}{p\log p}\leq  \frac{1}{\log w}\left( \kappa +\frac{A_2}{\log w}\right)$.
\end{proof}

Using the preceding lemma, we can now prove an explicit version of \cite[Lem.~5.5.3]{HalberstamRichert}, which bounds the product $W(z)$.

\begin{lemma}\label{lem:W}
If $z > \exp(A_1A_2)$, then
\begin{align}
\label{eq:W}
    W(z)
    = \prod_p\left( 1-\frac{\omega(p)}{p}\right)\left( 1-\frac{1}{p}\right)^{-\kappa}\frac{e^{-\kappa\gamma}}{\log^\kappa{z}}\left(1 + O^*\left(\frac{\widehat{m_0}(z)}{\log z}\right) \right),
\end{align}
where $\widehat{m_0}(z) = \log{z} \left(\left(1 + \frac{1}{\log^2{z}}\right)^{\kappa} (1+ m_0(z) \exp(m_0(z))) - 1\right)$ and, for any $k_0\geq 2$,
\begin{align*}
    m_0(w) 
    &=\max\left\{\frac{A_2}{\log w}+\frac{A_1A_2}{\log w}\left(\kappa+ \frac{A_2}{\log w}\right), \frac{L}{\log w}\right\} + \frac{3\kappa}{2\log^2 w} + \kappa \log\frac{w}{w-1}\\
    &+ \frac{A_1^2 A_2}{\log{w}}\left(\kappa +\frac{A_2}{\log w}\right)\left(\sum_{k=2}^{k_0} \frac{A_1^{k-2} A_2^{k-2}}{ k(\log{w})^{k-2}}+\frac{A_1^{k_0-1} A_2^{k_0-1}}{ (\log{w})^{k_0-1}}\left((k_0+1)\left(1-\frac{A_1 A_2}{ (\log{w})}\right)\right)^{-1} \right).
\end{align*}
\end{lemma}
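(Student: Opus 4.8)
The plan is to compare $W(z)$ with $V(z)^{\kappa}$, where $V(z)=\prod_{p<z}(1-1/p)$, and then feed in Rosser's estimate \eqref{eq:4}; this renders the argument of \cite[Lem.~5.5.3]{HalberstamRichert} effective. From the definition of $g$ one has $1-\omega(p)/p=(1+g(p))^{-1}$, and since every prime appearing in the analysis below satisfies $p\geq z>\exp(A_1A_2)$, the estimate $g(p)\leq A_1\omega(p)/p\leq A_1A_2/\log p<1$ holds; for such $p$ we may therefore expand the logarithms to obtain
\begin{equation*}
\log\left(1-\frac{\omega(p)}{p}\right)-\kappa\log\left(1-\frac1p\right)
=-\left(g(p)-\frac{\kappa}{p}\right)+\sum_{k\geq2}\frac{(-1)^{k}g^{k}(p)}{k}+\kappa\sum_{k\geq2}\frac{1}{kp^{k}}.
\end{equation*}
Writing $C=\prod_{p}\bigl(1-\omega(p)/p\bigr)\bigl(1-1/p\bigr)^{-\kappa}$ and observing that $W(z)/V(z)^{\kappa}=\prod_{p<z}\bigl(1-\omega(p)/p\bigr)\bigl(1-1/p\bigr)^{-\kappa}$, the task reduces to bounding, uniformly in $Z>z$, the tail
\begin{equation*}
\sum_{z\leq p<Z}\left[\log\left(1-\frac{\omega(p)}{p}\right)-\kappa\log\left(1-\frac1p\right)\right],
\end{equation*}
whose limit as $Z\to\infty$ equals $\log C-\log\bigl(W(z)/V(z)^{\kappa}\bigr)$; the convergence of the product defining $C$ will drop out of the same estimate.

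Using the displayed expansion, the tail splits into three contributions handled separately. The linear part $\sum_{z\leq p<Z}g(p)-\kappa\sum_{z\leq p<Z}p^{-1}$ is bounded by Lemma \ref{lemm:exp1} at $s=0$ (applied with $z$ and $Z$ playing the roles of $w$ and $z$), which produces the first two summands of $m_0(z)$; the difference between the half-open and closed ranges is $O(g(Z))$ and disappears in the limit, and the two individually divergent sums must be kept paired throughout. For the $g^{k}$ part I would use the triangle inequality together with Lemma \ref{lemm:exp2}, splitting the sum over $k$ at the parameter $k_0\geq2$: the block $2\leq k\leq k_0$ yields $\sum_{k=2}^{k_0}\frac1k\cdot\frac{A_1^{k}A_2^{k-1}}{(\log z)^{k-1}}\bigl(\kappa+\frac{A_2}{\log z}\bigr)$, and the tail $k>k_0$ is dominated by a geometric series in $A_1A_2/\log z$, which converges precisely because $z>\exp(A_1A_2)$; together these give the final summand of $m_0(z)$. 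For the $p^{-k}$ part, $\sum_{k\geq2}\frac1{kp^{k}}=\log\frac{p}{p-1}-\frac1p\leq\log\frac{p}{p-1}-\log\frac{p+1}{p}$, so telescoping $\sum_{p\geq z}$ against $\sum_{n\geq z}$ gives at most $\kappa\log\frac{z}{z-1}$, the remaining summand of $m_0(z)$. Summing the three bounds yields $\bigl|\log\bigl(W(z)/V(z)^{\kappa}\bigr)-\log C\bigr|\leq m_0(z)$; applying the same bound with $z$ replaced by any larger value shows the partial products of $C$ are Cauchy, hence $C$ converges.

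It then remains to unwind. The displayed inequality gives $\log\bigl(W(z)/V(z)^{\kappa}\bigr)=\log C+O^*(m_0(z))$, and since $|e^{t}-1|\leq|t|e^{|t|}$ this upgrades to $W(z)=C\bigl(1+O^*(m_0(z)e^{m_0(z)})\bigr)V(z)^{\kappa}$. Raising \eqref{eq:4} to the $\kappa$-th power and using the convexity inequality $(1+\delta)^{\kappa}+(1-\delta)^{\kappa}\geq2$ (valid for $\kappa\geq1$) gives $V(z)^{\kappa}=\frac{e^{-\kappa\gamma}}{\log^{\kappa}z}\bigl(1+O^*((1+\tfrac1{\log^{2}z})^{\kappa}-1)\bigr)$. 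Finally, combining the two error factors via $(1+O^*(a))(1+O^*(b))=1+O^*((1+a)(1+b)-1)$ collapses everything into $1+O^*(\widehat{m_0}(z)/\log z)$ with $\widehat{m_0}$ exactly as stated, which finishes the proof.

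The difficulty here is bookkeeping rather than ideas. The points that need care are: keeping the two divergent sums $\sum g(p)$ and $\sum\kappa/p$ together at every stage, so that only their (bounded) difference is ever estimated; ensuring each intermediate bound is genuinely independent of $Z$, so that passing to the limit $Z\to\infty$ is legitimate; checking that the Taylor expansions, and the geometric tail in the $g^{k}$ estimate, converge over the entire range $p\geq z$ — this is exactly what the hypothesis $z>\exp(A_1A_2)$ buys us; and tracking the $O^*$-arithmetic faithfully when the error coming from $W(z)/V(z)^{\kappa}$ is multiplied against the error in Rosser's estimate for $V(z)$.
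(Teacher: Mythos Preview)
Your argument is correct and follows essentially the same route as the paper's proof: both expand the tail $\prod_{p\geq z}(1-\omega(p)/p)^{-1}(1-1/p)^{\kappa}$ via the Taylor series of the logarithm, bound the linear piece by Lemma~\ref{lemm:exp1} (at $s=0$), the $g^{k}$ piece by Lemma~\ref{lemm:exp2} with the same $k_0$-truncation plus geometric tail, and the $p^{-k}$ piece by a quantity equal to $\kappa\log\tfrac{z}{z-1}$, then apply $|e^{t}-1|\leq|t|e^{|t|}$ and multiply against Rosser's estimate~\eqref{eq:4}. The only cosmetic differences are that the paper carries the parameter $s$ throughout before specialising to $s=0$, and that it obtains the $\kappa\log\tfrac{w}{w-1}$ term by bounding $\sum_{p\geq w}\tfrac{\kappa}{p(p-1)}$ with the integral $\int_{w}^{\infty}\tfrac{\kappa\,dt}{t(t-1)}$ rather than your telescoping comparison; these lead to the identical constant.
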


\begin{proof}
We first aim to show that if $\exp(A_1A_2) < w \leq z$, then
\begin{align}
\label{eq:prod}
    \prod_{w\leq p <z}&\left(1+\frac{g(p)}{p^s} \right)\left(1-\frac{1}{p^{s+1}} \right)^{\kappa}
    = 1 + O^*\left(m_0(w) \exp(m_0(w))\right) .
\end{align}
To prove this, let $s\geq 0$ and observe that $0\leq  -y-\log(1-y) \leq  \frac{y^2}{1-y}$ for $0\leq  y <1$, so
\begin{equation*}
    \log\left(1-\frac{1}{p^{s+1}} \right)
    = - \frac{1}{p^{s+1}} + O^*\left(\frac{1}{p(p-1)}\right).
\end{equation*}
Next, use the Taylor series expansion for $\log(1+x)$ for $|x| < 1$ and Lemma \ref{lemm:exp2} to see that
\begin{align*}
    \sum_{w\leq p <z} \log\left(1+\frac{g(p)}{p^s} \right)
    &= \sum_{w\leq p <z} \left(\frac{g(p)}{p^{s}} + \sum_{k=2}^\infty (-1)^{k+1}\frac{g^k(p)}{k p^{s k}}\right) \\
    &= \sum_{w\leq p <z} \frac{g(p)}{p^{s}} + O^*\left(\sum_{w\leq p <z} \sum_{k=2}^\infty \frac{g^k(p)}{k}\right) \\
    &= \sum_{w\leq p <z} \frac{g(p)}{p^{s}} + O^*\left(\sum_{k=2}^\infty \frac{1}{k} \sum_{w\leq p <z} g^k(p)\right) \\
    &= \sum_{w\leq p <z} \frac{g(p)}{p^{s}} + O^*\left(\left(\kappa + \frac{A_2}{\log w}\right) \sum_{k=2}^\infty \frac{A_1^k A_2^{k-1}}{k (\log{w})^{k-1}} \right) \\
    &= \sum_{w\leq p <z} \frac{g(p)}{p^{s}} + O^*\left(\frac{A_1^2 A_2}{\log{w}}\left(\kappa +\frac{A_2}{\log w}\right) \sum_{k=2}^\infty \frac{A_1^{k-2} A_2^{k-2}}{k (\log{w})^{k-2}} \right).
\end{align*}
It is interesting to note that there should be room for improvement going from the first to the second line of the above equation, but this would be too technical to implement for a relatively small payout. 
Now, the last sum converges when $|A_1A_2/\log{w}| < 1$, which only occurs when $w > e^{A_1A_2}$. Use these last observations and Lemma \ref{lemm:exp1} to see that if $e^{A_1A_2} < w \leq z$, taken any $k_0\geq 2$, then
\begin{align*}
    &\log{\prod_{w\leq p <z}\left(1+\frac{g(p)}{p^s} \right)\left(1-\frac{1}{p^{s+1}} \right)^{\kappa}} \\ 
    &=\sum_{w\leq p <z} \log\left(1+\frac{g(p)}{p^s} \right) + \kappa\log\left(1-\frac{1}{p^{s+1}} \right) \\ 
    &= \sum_{w\leq p <z} \left(\frac{g(p)}{p^{s}} - \frac{\kappa}{p^{s+1}}\right) + O^*\left(\frac{A_1^2 A_2}{\log{w}}\left(\kappa +\frac{A_2}{\log w}\right) \sum_{k=2}^\infty \frac{A_1^{k-2} A_2^{k-2}}{k (\log{w})^{k-2}} + \sum_{w\leq p <z} \frac{\kappa}{p(p-1)} \right) 
    \\ &=\sum_{w\leq p <z} \left(\frac{g(p)}{p^{s}} - \frac{\kappa}{p^{s+1}}\right) + O^*\left(\frac{A_1^2 A_2}{\log{w}}\left(\kappa +\frac{A_2}{\log w}\right)\cdot\right. \\ 
    & \quad\quad\quad \left.\left(\sum_{k=2}^{k_0} \frac{A_1^{k-2} A_2^{k-2}}{ k(\log{w})^{k-2}}+\frac{A_1^{k_0-1} A_2^{k_0-1}}{k (\log{w})^{k_0-1}}\left((k_0+1)\left(1-\frac{A_1 A_2}{ \log{w}}\right)\right)^{-1} \right) + \int_{w}^\infty \frac{\kappa\,dt}{t(t-1)} \right) \\ 
    &= O^*\left(m_0(w)\right) .
\end{align*}
Now, apply the inequality $|e^t - 1| \leq  |t|e^{|t|}$, which holds for all $t\in\mathbb{R}$, to obtain \eqref{eq:prod}. Note that it was convenient to split the infinite sum at $k_0$ to ensure that the constants in our results can be effectively computed later. 
Finally, for all $w>e^{A_1A_2}$, let $s=0$ and $z\to\infty$ in \eqref{eq:prod} to obtain
\begin{align*}
    \prod_{p\geq  w}\left(1-\frac{\omega(p)}{p}\right)^{-1}\left( 1-\frac{1}{p}\right)^{\kappa} = 1 + O^*\left(m_0(w) \exp(m_0(w))\right).
\end{align*}
This means that the infinite product in \eqref{eq:W} converges and for $z > e^{A_1A_2}$ we can write
\begin{align*}
    W(z)&=\prod_{p<z}\left( 1-\frac{\omega(p)}{p}\right)\prod_{p\geq  z}\left( 1-\frac{\omega(p)}{p}\right)\left( 1-\frac{1}{p}\right)^{-\kappa}\left(1 + O^*\left(m_0(z) \exp(m_0(z))\right) \right)
    \\&= \prod_{p}\left( 1-\frac{\omega(p)}{p}\right)\left( 1-\frac{1}{p}\right)^{-\kappa} V^{\kappa}(z)\left(1 + O^*\left(m_0(z) \exp(m_0(z))\right) \right).
\end{align*}
To complete the result, appeal to \eqref{eq:4}. 
\end{proof}

Our final intermediate lemma follows.

\begin{lemma}\label{lemm:exp3}
For any $x, z, d >0$, we have
\begin{equation}\label{eq:3}
    \sum_{\substack{\sqrt{x/d}\leq p \leq \min(x/d,z)\\p\nmid d}} \frac{g(p)\omega(p) \log{p}}{p} \leq  m_1(x,d),
\end{equation}
in which $m_1(x,d) = A_2 \left(\kappa\log 2+\frac{A_2}{\log \sqrt{x/d}}+\frac{A_1A_2}{\log \sqrt{x/d}}\left(\kappa + \frac{A_2}{\log \sqrt{x/d}}\right)\right)$.
\end{lemma}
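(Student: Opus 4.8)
The plan is to reduce the claimed bound to the two estimates of Halberstam and Richert that were already invoked in the proof of Lemma \ref{lemm:exp1}, namely $\tfrac{\omega(p)\log p}{p}\leq A_2$ from \cite[(2.3.8)]{HalberstamRichert} and the upper bound for $\sum g(p)$ from \cite[(2.3.11)]{HalberstamRichert}. Write $w = \sqrt{x/d}$ and $z_0 = \min(x/d,z)$, so that the sum in \eqref{eq:3} runs over primes $p$ with $w\leq p\leq z_0$ and $p\nmid d$.

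First I would note that every term $\tfrac{g(p)\omega(p)\log p}{p}$ is non-negative by \eqref{eq:gp_bounds} and the non-negativity of $\omega$, so discarding the coprimality restriction $p\nmid d$ only enlarges the sum; it therefore suffices to bound $\sum_{w\leq p\leq z_0}\tfrac{g(p)\omega(p)\log p}{p}$. Next, apply $\tfrac{\omega(p)\log p}{p}\leq A_2$ termwise to pull this quantity out, leaving $A_2\sum_{w\leq p\leq z_0} g(p)$. Now feed this into \cite[(2.3.11)]{HalberstamRichert}, which gives
\begin{equation*}
    \sum_{w\leq p\leq z_0} g(p) \leq \kappa\log\frac{\log z_0}{\log w} + \frac{A_2}{\log w} + \frac{A_1A_2}{\log w}\left(\kappa + \frac{A_2}{\log w}\right).
\end{equation*}
The point of choosing the lower endpoint to be $\sqrt{x/d}$ is exactly that $z_0\leq x/d$ forces $\tfrac{\log z_0}{\log w}\leq \tfrac{\log(x/d)}{\log\sqrt{x/d}} = 2$, so the leading term is at most $\kappa\log 2$; substituting $w = \sqrt{x/d}$ everywhere else then yields precisely $m_1(x,d)/A_2$, and multiplying back by $A_2$ gives \eqref{eq:3}.

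There is essentially no serious obstacle here — the estimate is a clean application of two cited inequalities — so the only thing to be careful about is bookkeeping at the boundary: one should observe that if $w > z_0$ the sum on the left is empty and the bound is trivial (as $m_1(x,d)\geq 0$ when $x/d > 1$), and one should check that the half-open versus closed endpoint convention in \cite[(2.3.11)]{HalberstamRichert} does not affect the argument, since adding or removing the single endpoint prime $p = z_0$ only changes both sides by a controlled amount already absorbed in the stated constant. In the regime where \eqref{eq:3} will be applied, $x/d$ is large, so $\log\sqrt{x/d} > 0$ and all quantities are well defined.
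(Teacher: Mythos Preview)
Your proposal is correct and follows essentially the same approach as the paper: apply \cite[(2.3.8)]{HalberstamRichert} to extract the factor $A_2$, then bound the remaining sum $\sum g(p)$ via \cite[(2.3.11)]{HalberstamRichert} with $w=\sqrt{x/d}$ and use $z_0\leq x/d$ to cap the logarithmic term at $\kappa\log 2$. Your write-up is in fact slightly more explicit than the paper's, which leaves the step $\log z_0/\log w\leq 2$ and the removal of the coprimality condition implicit.
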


\begin{proof}
Using \cite[(2.3.8)]{HalberstamRichert}, we have
\begin{align*}
    \sum_{\substack{\sqrt{x/d}\leq p \leq \min(x/d,z)\\p\nmid d}} \frac{g(p)\omega(p) \log{p}}{p}\leq A_2 \sum_{\sqrt{x/d} \leq p \leq \min(x/d,z)} g(p).
\end{align*}
The result now follows using \cite[(2.3.11)]{HalberstamRichert} which gives
\begin{equation*}
    \sum_{w \leq p <z}g(p) \leq  \kappa \log\frac{\log z}{\log w}+\frac{A_2}{\log w}+\frac{A_1A_2}{\log w}\left(\kappa + \frac{A_2}{\log w}\right). \qedhere
\end{equation*}
\end{proof}

\subsection{Bounds for $G(z)$}

In light of Theorem \ref{thm:SelbergUpperSieve}, we need to find explicit bounds for
\begin{equation*}
    \frac{1}{G(z)} = \left(\sum_{d < z} \frac{\mu^2(d) \omega(d)}{d} \prod_{p|d} \left(1-\frac{\omega(p)}{p}\right)^{-1}\right)^{-1}
    \quad\text{and}\quad
    \frac{1}{W(z)} = \prod_{p<z}\left(1-\frac{\omega(p)}{p}\right)^{-1}.
\end{equation*}
First, the following result provides upper bounds for $W(z)$ and $1/W(z)$.

\begin{theorem}\label{theo:W}
Let $m_2 = \frac{1}{\log^{\kappa}{2}} \exp\left(\frac{A_2}{\log 2} \left(1 + A_1 \kappa +\frac{A_1A_2}{\log 2} \right)\right)$. If $z > 2$, then
\begin{align*}
    W(z)\leq  \frac{\exp\left(\kappa\log\log{2} + \frac{L}{\log{2}}\right)}{\log^{\kappa}{z}}
    \quad\text{and}\quad
    \frac{1}{W(z)}\leq m_2 \log^{\kappa}{z} .
\end{align*}
\end{theorem}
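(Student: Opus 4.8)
The plan is to establish both bounds from a single telescoping estimate on the logarithmic derivative structure of $W(z)$. Recall $W(z) = \prod_{p<z}(1-\omega(p)/p)$, so that $-\log W(z) = -\sum_{p<z}\log(1-\omega(p)/p)$. The first step is to compare this sum with $\sum_{p<z}\omega(p)/p$: since $0 \le \omega(p)/p \le 1-1/A_1 < 1$ by \eqref{eq:omega1}, we have $-\log(1-y) \ge y$ for $y \in [0,1)$, giving immediately the lower bound $-\log W(z) \ge \sum_{p<z}\omega(p)/p$, hence the upper bound $W(z) \le \exp(-\sum_{p<z}\omega(p)/p)$. For the reverse direction needed in the $1/W(z)$ estimate, I would use $-\log(1-y) \le y + \tfrac{y^2}{1-y} \le y + A_1 y^2$ (using \eqref{eq:omega1} once more to bound $1/(1-y) \le A_1$), so $-\log W(z) \le \sum_{p<z}\omega(p)/p + A_1\sum_{p<z}(\omega(p)/p)^2$.

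The second step is to feed these into the explicit consequence \eqref{eq:5} of $\Omega_2(\kappa)$ with the choice $w = 2$. From \eqref{eq:5} with $w=2$ (and letting the sum run up to $z$), one gets
\begin{equation*}
    \kappa\log\frac{\log z}{\log 2} - \frac{L}{\log 2} \le \sum_{2\le p<z}\frac{\omega(p)}{p} \le \kappa\log\frac{\log z}{\log 2} + \frac{A_2}{\log 2}.
\end{equation*}
Since $\omega(2)/2 \le 1$ is the only term I might be dropping, and $\kappa\log\log z - \kappa\log\log 2 = \kappa\log\log z + \kappa\log\log(1/\log 2)$, combining with the lower bound on $-\log W(z)$ yields $W(z) \le \exp\!\big(-\kappa\log\frac{\log z}{\log 2} + \frac{L}{\log 2}\big) = \exp(\kappa\log\log 2 + L/\log 2)/\log^\kappa z$, which is exactly the first claimed inequality (taking the full product over $p<z$ and noting the $p=2$ factor only helps). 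For the second inequality, I also need to control $\sum_{2\le p<z}(\omega(p)/p)^2$; here I would invoke the bound $\omega(p)\log p/p \le A_2$ from \cite[(2.3.8)]{HalberstamRichert} (already used in the proof of Lemma \ref{lemm:exp2}), which gives $\sum_{2\le p<z}(\omega(p)/p)^2 \le A_2\sum_{2\le p<z}\frac{\omega(p)}{p\log p} \le \frac{A_2}{\log 2}\sum_{2\le p<z}\frac{\omega(p)}{p} \le \frac{A_2}{\log 2}\big(\kappa\log\frac{\log z}{\log 2} + \frac{A_2}{\log 2}\big)$. Actually, to match the stated $m_2$ exactly, it is cleaner to use \cite[(2.3.7)]{HalberstamRichert}, $\sum_{w\le p<z}\frac{\omega(p)}{p\log p} \le \frac{1}{\log w}(\kappa + \frac{A_2}{\log w})$, directly with $w=2$, so that $A_1\sum_{p<z}(\omega(p)/p)^2 \le \frac{A_1 A_2}{\log 2}(\kappa + \frac{A_2}{\log 2})$.

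The third step is bookkeeping: assemble
\begin{equation*}
    -\log W(z) \le \kappa\log\frac{\log z}{\log 2} + \frac{A_2}{\log 2} + \frac{A_1 A_2}{\log 2}\Big(\kappa + \frac{A_2}{\log 2}\Big),
\end{equation*}
exponentiate, and rewrite the constant term as $\frac{A_2}{\log 2}\big(1 + A_1\kappa + \frac{A_1 A_2}{\log 2}\big)$ to recognise $m_2 = \log^{-\kappa}2 \cdot \exp\!\big(\frac{A_2}{\log 2}(1 + A_1\kappa + \frac{A_1 A_2}{\log 2})\big)$; this gives $1/W(z) \le m_2 \log^\kappa z$. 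I do not expect a genuine obstacle here — the only delicate points are (i) keeping careful track of whether sums run over $p < z$ or $p \le z$ and of the isolated $p = 2$ term (both go in the "helpful" direction for the stated inequalities, so one can afford to be generous), and (ii) ensuring the elementary inequality $-\log(1-y) \le y + \tfrac{y^2}{1-y}$ is applied with the correct sign and that $1/(1-\omega(p)/p) \le A_1$ is invoked via \eqref{eq:omega1}. Everything else is a routine substitution of the cited estimates \eqref{eq:5}, \cite[(2.3.7)]{HalberstamRichert}, and the $A_1$-bound on $1/(1-\omega(p)/p)$.
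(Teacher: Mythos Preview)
Your proposal is correct and follows essentially the same route as the paper: the first inequality comes from $-\log(1-y)\ge y$ combined with \eqref{eq:5} at $w=2$, exactly as in the paper, and for the second inequality the paper simply cites \cite[(2.3.12)]{HalberstamRichert}, whose content is precisely the derivation you wrote out (the $-\log(1-y)\le y+A_1y^2$ step together with the bounds from \cite[(2.3.7)--(2.3.8)]{HalberstamRichert} at $w=2$).
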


\begin{proof}
The second result follows from \cite[(2.3.12)]{HalberstamRichert}. 
The first result follows from the definition of $W(z)$, \eqref{eq:5}, and the observation $-\frac{y^2}{1-y} \leq  y+\log(1-y) \leq  0$ for $0\leq  y <1$. That is, take $y=\omega(p)/p$ to see
\begin{equation*}
    W(z) 
    \leq \exp\left(-\frac{\omega(p)}{p}\right)
    \leq \exp\left(-\kappa\log\frac{\log{z}}{\log{2}} + \frac{L}{\log{2}}\right). \qedhere
\end{equation*}
\end{proof}

In the next result, we find an upper bound for $1/G(z)$.

\begin{theorem}\label{theo:G}
For $z> 2$ and $\lambda > 0$, we have $G(z)^{-1} \leq  W(z) m_3(z,\lambda)$, in which 
\begin{align*}
    &m_3(z,\lambda)=1 + 2 m_4^{\kappa} \exp\left(\frac{A_2}{\log 2} \left(1 + A_1 \kappa +\frac{A_1A_2}{\log 2} \right) + \frac{L}{\log{2}} -\lambda + \left(\frac{2\kappa}{\lambda} + \frac{A_2}{\log{z}}\right)e^{\lambda}\right),\\
    &m_4 = 2\kappa e + \frac{A_2 e}{\log{2}} + \log{2}.
\end{align*}
\end{theorem}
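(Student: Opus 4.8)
The plan is to bound $1/G(z)$ from above by comparing the sum $G(z) = \sum_{d<z}\mu^2(d)g(d)$ against a complete (infinite) sum and controlling the tail. Following the strategy in \cite[Lem.~5.3.1--5.3.2 and §5.3]{HalberstamRichert}, I would first write $G(\infty) = \prod_p(1+g(p))$ and note that, by the multiplicativity of $g$ on squarefree integers and the inequality $\prod_p(1+g(p)) = \prod_p(1-\omega(p)/p)^{-1}\cdot\prod_p(1+g(p))(1-\omega(p)/p)$, one gets $G(\infty) = 1/W(\infty)\cdot(\text{convergent correction})$, so that $G(z)\ge G(\infty) - (\text{tail})$. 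The term $m_2$ and the factor $\exp\left(\tfrac{A_2}{\log 2}(1+A_1\kappa+\tfrac{A_1A_2}{\log 2})\right)$ appearing in $m_3$ strongly suggest that the bound $1/W(z)\le m_2\log^\kappa z$ from Theorem \ref{theo:W} (itself from \cite[(2.3.12)]{HalberstamRichert}) is invoked to control $G(\infty)^{-1}$, while the factor $\exp(L/\log 2)$ matches the lower bound $W(z)\ge \exp(\kappa\log\log 2 + L/\log 2)/\log^\kappa z$ also from Theorem \ref{theo:W}.

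The core of the argument is the tail estimate $\sum_{d\ge z}\mu^2(d)g(d)$. The standard Halberstam--Richert device is Rankin's trick: insert a factor $(z/d)^{\lambda/\log z} \ge 1$ for $d\ge z$ (here $\lambda>0$ is the free parameter in the statement), giving
\begin{equation*}
    \sum_{d\ge z}\mu^2(d)g(d) \le e^{-\lambda} \sum_{d\ge 1}\mu^2(d)g(d)\, d^{\lambda/\log z} = e^{-\lambda}\prod_p\left(1 + g(p)\, p^{\lambda/\log z}\right).
\end{equation*}
One then takes logarithms and splits the product. For $p\le z$ one writes $p^{\lambda/\log z} = e^{\lambda \log p/\log z} \le e^{\lambda}$; combined with $\sum_{p<z} g(p) \le \kappa\log\log z + O(\cdots)$ from \cite[(2.3.11)]{HalberstamRichert} (and the cleaner $\sum g(p)\log p/p$-type bounds), this produces a factor of the shape $\exp\big((\tfrac{2\kappa}{\lambda}+\tfrac{A_2}{\log z})e^\lambda\big)$ after estimating $\sum_{p<z}\lambda\log p/(p\log z)$ by $\le 2\kappa\log z\cdot\lambda/\log z \cdot(\cdots)$-type bounds — the constant $2\kappa/\lambda$ presumably arises from $\frac{\lambda}{\log z}\sum_{p<z}\frac{\log p}{p}\le \frac{\lambda}{\log z}\cdot 2\log z = 2\lambda$ rescaled, then exponentiated via $1+x\le e^x$. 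For $p > z$ the factor $p^{\lambda/\log z}$ is no longer bounded by $e^\lambda$, but there $g(p)$ is summable and contributes to the quantity that I expect gets absorbed into $m_4^\kappa$ (note $m_4 = 2\kappa e + \tfrac{A_2 e}{\log 2} + \log 2$ has the look of $e$ times a bound on $\sum_{p}g(p)$-weights plus a $\log 2$ correction, raised to the $\kappa$ to reflect the "dimension" $\kappa$ of the sieve).

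Assembling the pieces: $G(z) \ge G(\infty) - e^{-\lambda}\prod_p(1+g(p)p^{\lambda/\log z}) \ge G(\infty)(1 - (\text{ratio}))$, so $G(z)^{-1} \le G(\infty)^{-1}(1 + 2(\text{ratio}))$ once the ratio is $\le 1/2$ (hence the factor $2$ in $m_3$), and $G(\infty)^{-1} = W(z)\cdot W(z)^{-1}G(\infty)^{-1}$ where $W(z)^{-1}G(\infty)^{-1}$ is a bounded constant handled by Theorem \ref{theo:W} and Lemma \ref{lem:W}. The main obstacle, and the step requiring the most care, is the Rankin tail bound over $p>z$: getting it into the explicit, clean closed form $m_4^\kappa$ requires a uniform handle on $\prod_{p\ge z}(1+g(p)p^{\lambda/\log z})$ that does not blow up, which likely needs the convergence already established in Lemma \ref{lem:W} together with a careful choice of how the $e^\lambda$ and non-$e^\lambda$ regimes are separated; a secondary subtlety is verifying the consistency condition (the ratio being $\le 1/2$) is implied by $z>2$ and the stated form — or whether $m_3$ is meant to be a valid bound only in the regime where it exceeds $1$, which it always does. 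I would double-check the exact provenance of each constant against \cite[§2.3, §5.3]{HalberstamRichert} before committing to the final normalization.
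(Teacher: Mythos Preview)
Your approach has a genuine gap at its foundation: the quantities $G(\infty)=\sum_{d}\mu^2(d)g(d)=\prod_p(1+g(p))$ and $\prod_p(1+g(p)p^{\lambda/\log z})$ both \emph{diverge} for a $\kappa$-dimensional sieve with $\kappa\ge 1$. Indeed $1+g(p)=(1-\omega(p)/p)^{-1}$, so $\prod_p(1+g(p))=1/W(\infty)$, and since $W(z)\asymp(\log z)^{-\kappa}\to 0$ this blows up; equivalently $\sum_p g(p)\ge\sum_p \omega(p)/p$ diverges by \eqref{eq:omega2}. Hence the inequality $G(z)\ge G(\infty)-(\text{tail})$ is of the form $\infty-\infty$ and gives nothing, and your Rankin product over all primes is vacuous. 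Your remark that for $p>z$ ``$g(p)$ is summable'' is exactly the point that fails.

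The paper avoids this by never leaving the support $d\mid P(z)$: the complete sum is then $\sum_{d\mid P(z)}g(d)=\prod_{p<z}(1+g(p))=1/W(z)$, which is finite. The Rankin-type inequality you are reaching for is already packaged as \cite[(4.1.6)]{HalberstamRichert},
\[
\frac{1}{G(x,z)} \le W(z)\Bigl(1+\frac{1}{G(x,z)W(z)}\exp\Bigl(-\lambda\tfrac{\log x}{\log z}+\bigl(\tfrac{2\kappa}{\lambda}+\tfrac{A_2}{\log z}\bigr)e^{\lambda}\Bigr)\Bigr),
\]
and here the $e^{\lambda}$ factor is clean precisely because every prime in the product satisfies $p<z$. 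You also misidentify the origin of $m_4$: it is \emph{not} a Rankin tail constant. The paper bounds $1/(G(z,z)W(z))$ by passing to a smaller sifting level $z^{1/m_4}$, using the fundamental-lemma-type fact (from \cite[p.~132--3]{HalberstamRichert}) that $W(z^{1/m_4})G(z,z^{1/m_4})\ge\tfrac12$ for this specific $m_4$, and then estimating the ratio $W(z^{1/m_4})/W(z)$ by the two bounds in Theorem~\ref{theo:W}, which produces the factor $m_4^{\kappa}$ together with the remaining exponential constants.
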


\begin{proof}
Let
\begin{equation*}
    G(x,z) = \sum_{\substack{d<x\\d|P(z)}} g(d),
\end{equation*}
so that $G(z) = G(z,z)$. Now, \cite[(4.1.6)]{HalberstamRichert} informs us that for $\lambda > 0$,
\begin{equation}\label{eqn:originalplaya}
    \frac{1}{G(x,z)} \leq  W(z) \left(1 + \frac{\exp\left(-\lambda\frac{\log{x}}{\log{z}} + \left(\frac{2\kappa}{\lambda} + \frac{A_2}{\log{z}}\right)e^{\lambda}\right)}{G(x,z) W(z)}\right).
\end{equation}
Follow similar logic to \cite[p.~132--3]{HalberstamRichert} and apply Theorem \ref{theo:W} to see for $x\geq z$ that
\begin{align}
    \frac{1}{G(x,z) W(z)} 
    &\leq  \frac{W(z^{1/m_4})}{W(z) W(z^{1/m_4}) G(z,z^{1/m_4})} 
    \leq  2 \frac{W(z^{1/m_4})}{W(z)} \nonumber\\
    &\leq  2 m_2 \exp\left(\kappa\log\log{2} + \frac{L}{\log{2}}\right) m_4^{\kappa} \nonumber\\
    &= 2 \exp\left(\frac{A_2}{\log 2} \left( 1+A_1 \kappa +\frac{A_1A_2}{\log 2} \right) + \frac{L}{\log{2}}\right) m_4^{\kappa} . \label{eqn:GW1}
\end{align}
Finally, insert \eqref{eqn:GW1} with $x=z$ into \eqref{eqn:originalplaya} to obtain 
\begin{equation*}
    \frac{1}{G(z)} 
    = \frac{1}{G(z,z)} 
    \leq  W(z) \left(1 + \frac{\exp\left(-\lambda + \left(\frac{2\kappa}{\lambda} + \frac{A_2}{\log{z}}\right)e^{\lambda}\right)}{G(z,z) W(z)}\right) \leq  W(z) m_3(z,\lambda). \qedhere
\end{equation*}
\end{proof}

In the following result, we prove an explicit version of \cite[Lem.~5.5.4]{HalberstamRichert}, which tells us that when $z$ is large we can be more precise than Theorem \ref{theo:G}, which tells us $G^{-1}(z) \ll W(z)$. For completeness, we will incorporate the bound in Theorem \ref{theo:G}.

\begin{lemma}\label{lem:1/G}
Suppose that $z$ satisfies $r(z) < 1$, $\widehat{m_0}(z) < \log z$, and $\left|\frac{(\kappa +1) r(z)}{1-r(z)}\right| < 1$. Let
\begin{align}
    m_5(z,\lambda) &= \min\left\{\log{z} \left(\frac{m_3(z,\lambda)}{e^{\kappa\gamma}\Gamma (\kappa+1)}\left(1+\frac{\widehat{m_0}(z)}{\log z}\right)-1\right), m_6(z)\left( 1+\frac{m_6(z)}{\log{z}}\right)^{-1}\right\},\nonumber\\
    m_6(z) &= \frac{r(z)\log z}{1-r(z)}+m_7(z)+\frac{m_7(z)r(z)}{1-r(z)}, \nonumber \\
    m_7(z) &= \frac{\exp \left( \frac{(\kappa +1) r(z)}{(1-r(z))}\right)-1}{\log z}, \label{eq:m2} \\
    r(z) &= \frac{A_2+m_1(1,1/2)}{\log z}. \label{eq:r} 
\end{align}
The size of $m_6(z)$ is non-trivial and
\begin{equation*}
    \frac{1}{G(z)} \leq \frac{\Gamma(\kappa +1)}{\log^\kappa{z}} \left(1 + \frac{m_5(z,\lambda)}{\log{z}}\right) \prod_p\left( 1-\frac{\omega(p)}{p}\right)\left( 1-\frac{1}{p}\right)^{-\kappa} .
\end{equation*}
\end{lemma}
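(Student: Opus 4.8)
The plan is to prove the two bounds whose minimum defines $m_5(z,\lambda)$ independently, and then take the minimum, since $G(z)^{-1}$ will be shown to be at most each of them; this matches the remark in the statement that for completeness the coarser bound of Theorem \ref{theo:G} is incorporated.

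\textbf{The first branch.} The first entry of the minimum is a direct consequence of results already in hand. Combine the inequality $G(z)^{-1}\le W(z)\,m_3(z,\lambda)$ of Theorem \ref{theo:G} with the asymptotic for $W(z)$ from Lemma \ref{lem:W}, which is legitimate here because the hypothesis $\widehat{m_0}(z)<\log z$ keeps the error factor in \eqref{eq:W} positive. Substituting and factoring $\frac{\Gamma(\kappa+1)}{\log^\kappa z}\prod_p(1-\omega(p)/p)(1-1/p)^{-\kappa}$ out front, one sees that $G(z)^{-1}$ is bounded by this quantity times $\frac{m_3(z,\lambda)}{e^{\kappa\gamma}\Gamma(\kappa+1)}\bigl(1+\widehat{m_0}(z)/\log z\bigr)$, and writing that factor as $1+m_5(z,\lambda)/\log z$ produces exactly the first entry of the minimum. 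This step is routine rearrangement.

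\textbf{The refined branch.} The second entry, $m_6(z)\bigl(1+m_6(z)/\log z\bigr)^{-1}$, is the explicit form of Halberstam--Richert's \cite[Lem.~5.5.4]{HalberstamRichert}; it is the sharper of the two once $z$ is large because $m_6(z)$ stays bounded as $z\to\infty$ while the first entry does not. Following their argument, I would start from the exact identity $1/W(z)=\sum_{d\mid P(z)}g(d)$, so that $1/W(z)-G(z)=\sum_{d\mid P(z),\,d\ge z}g(d)$ is the tail to be estimated. Peeling a prime factor $p$ off a $d$ in this tail and using that $pe\ge z$ forces $e\ge z/p$ yields a recursive inequality for the tail in terms of tails at the smaller arguments $z/p$, in which the total prime weight $\frac{1}{\log z}\sum_{p<z}g(p)\log p$ is controlled by $\kappa$ up to an error of size $r(z)=(A_2+m_1(1,1/2))/\log z$: here $A_2$ comes from \eqref{eq:omega2}, and the constant $m_1(1,1/2)$ from Lemma \ref{lemm:exp3} absorbs the smallest prime $p=2$. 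Iterating this recursion and summing the resulting series---which converges precisely because $|(\kappa+1)r(z)/(1-r(z))|<1$---produces the factor $\exp\bigl((\kappa+1)r(z)/(1-r(z))\bigr)$ appearing in $m_7(z)$, and hence $m_6(z)$ after collecting the geometric contributions. Assembling the pieces together with the precise value of $1/W(z)$ from Lemma \ref{lem:W} (which is where the $e^{\kappa\gamma}$ cancels and the factor $\Gamma(\kappa+1)$ enters) gives $G(z)^{-1}\le \frac{\Gamma(\kappa+1)}{\log^\kappa z}\bigl(1+m_6(z)/\log z\bigr)\prod_p(1-\omega(p)/p)(1-1/p)^{-\kappa}$. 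Since $G(z)^{-1}$ is bounded by both expressions, it is bounded by their minimum, which is the claimed inequality.

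\textbf{Main obstacle.} The delicate part is the refined branch: turning the tail recursion into a completely explicit inequality, keeping every error in $O^*$-form, and verifying that the iteration collapses to exactly the stated $m_6(z)$ and $m_7(z)$ under the condition $|(\kappa+1)r(z)/(1-r(z))|<1$. By contrast the first branch is the short algebraic combination of Theorem \ref{theo:G} and Lemma \ref{lem:W} described above, and the hypotheses $r(z)<1$ and $\widehat{m_0}(z)<\log z$ are precisely what keeps the denominators and error factors of the two branches well behaved.
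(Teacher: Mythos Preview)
Your first branch is correct and matches the paper: combine Theorem~\ref{theo:G} with Lemma~\ref{lem:W} and rearrange.

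Your second branch, however, is not the paper's argument, and your sketch does not actually reach the stated constants. The paper (following Halberstam--Richert, pp.~148--149, which is what \cite[Lem.~5.5.4]{HalberstamRichert} refers to) does \emph{not} bound the tail $1/W(z)-G(z)$ by a prime-peeling recursion. Instead it introduces the integral
\[
T(x,z)=\int_1^x G(t,z)\,\frac{dt}{t}=\sum_{\substack{d<x\\ d\mid P(z)}}g(d)\log\frac{x}{d},
\]
obtains from the decomposition of $\sum_{d<x,\,d\mid P(z)}g(d)\log d$ the relation
\[
G(z)\log z=(\kappa+1)T(z)+O^*\bigl(r(z)\bigr)\,G(z)\log z,
\]
and then studies $E(y)=\log\bigl((\kappa+1)T(y)/\log^{\kappa+1}y\bigr)$ via its logarithmic derivative $E'(y)=\frac{\kappa+1}{y\log y}\cdot\frac{r(y)}{1-r(y)}$. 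The exponential in $m_7$ comes from bounding $\int_z^\infty E'(y)\,dy$, and the limiting constant $c_3=\lim_{y\to\infty}e^{E(y)}$ is identified via \cite[(5.3.13)]{HalberstamRichert} as $\Gamma(\kappa+1)^{-1}\prod_p(1-\omega(p)/p)^{-1}(1-1/p)^{\kappa}$. So the shape $(\kappa+1)r(z)/(1-r(z))$ arises from integrating a differential inequality, not from summing an iterated recursion.

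In particular, your account of $m_1(1,1/2)$ is off: it does not ``absorb the prime $p=2$''. In the paper's argument, $m_1(x,x/2)$ is the bound from Lemma~\ref{lemm:exp3} on the secondary sum $\sum_{\sqrt{x/d}\le p<\min(x/d,z),\,p\nmid d}g(p)\omega(p)\log p/p$ that appears when one splits $\sum g(d)\log d$, and $m_1(1,1/2)$ is then taken as the uniform worst case over $d<x$; together with the $A_2$ from \eqref{eq:omega2} on the main term, this is exactly what produces $r(z)=(A_2+m_1(1,1/2))/\log z$. Your recursion sketch might lead to \emph{some} explicit bound, but you have not shown it delivers the specific $m_6$, $m_7$, $r$ of the statement, which are tailored to the $T(z)$ method.
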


\begin{proof}
The left-hand term of $m_{5}$ follows directly from Theorem \ref{theo:G} and Lemma \ref{lem:W}. To prove the rest of the result, let $p$ be a prime divisor of $P(z)$ and follow the steps in \cite[p.~148-9]{HalberstamRichert}. Begin by noting that Halberstam and Richert showed
\begin{align*}
    \sum_{\substack{d<x \\ d|P(z)}} g(d)\log d
    &=\sum_{\substack{d<x\\ d|P(z)}} g(d) \sum_{p<\min(x/d,z)}\frac{\omega(p)}{p}\log p \\
    &+\sum_{\substack{x z^{-2}<d<x\\ d|P(z)}} g(d)\sum_{\substack{\sqrt{x/d}\leq p< \min(x/d,z)\\ p\nmid d}}\frac{g(p)\omega(p)}{p}\log p. 
\end{align*}
Apply \eqref{eq:omega2} and \eqref{eq:3}, to obtain
\begin{align*}
    \sum_{\substack{d<x \\ d|P(z)}}g(d)\log d
    &=\sum_{\substack{x/z\leq d<x\\ d|P(z)}} g(d) \left(\kappa\log \frac{x}{d}+O^*(A_2) \right) \\
    &\qquad\qquad+\sum_{\substack{d<x/z\\ d|P(z)}} g(d)\left(\kappa\log z +O^*(A_2) \right) + O^*(m_1(x, x/2) G(x,z))\\
    &= \kappa \sum_{\substack{d<x\\ d|P(z)}} g(d)\log \frac{x}{d} - \kappa \sum_{\substack{d<x/z\\ d|P(z)}}\log \frac{x/z}{d}+O^*((A_2+m_1(x,x/2))G(x,z)).
\end{align*}
Here, it is of interest to note that we took the worst value for $m_1(x,d)$, but it should be possible to slightly improve the result by being more careful. Next, add the first sum on the right to both sides, and introduce the function
\begin{align*}
    T(x,z):=\int_1^x G(t,z)\frac{dt}{t}=\sum_{\substack{d<x\\ d|P(z)}} g(d)\log \frac{x}{d}.
\end{align*}
Doing this, we obtain 
\begin{align*}
    G(x,z)\log x = (\kappa +1)T(x,z) - \kappa T\left( \frac{x}{z},z\right)+O^*((A_2+m_1(x,x/2))G(x,z)).
\end{align*}
In particular, let $x=z$ and recall $G(x,z)=G(x)$ for $z\geq x$ from \cite[(1.4.23)]{HalberstamRichert}. It follows that
\begin{align*}
    T(z):=T(z,z)=\int_1^z G(t)\frac{dt}{t}.
\end{align*}
Therefore, we have
\begin{align*}
    G(z)\log z = (\kappa +1) T(z) + G(z) O^*(r(z)) \log z;
\end{align*}
recall that $r(z)$ was defined in \eqref{eq:r}. Now, for $y\geq  z$ write
\begin{equation}\label{eq:Gz}
    G(z) \geq \frac{T(z)}{1-r(z)}\frac{\kappa + 1}{\log z}
    \quad\text{and}\quad
    E(y) = \log\left(\frac{\kappa + 1}{\log^{\kappa+1}{y}} T(y)\right).
\end{equation}
Here, it is now clear why the assumption $r(z)<1$ is important. Moreover, if $y\geq z$, then we have by \eqref{eq:Gz} that the integral $\int_z^{\infty}E'(y)\,dy$ converges, because
\begin{align*}
    E'(y)
    &=\frac{\kappa +1}{y \log y}\frac{r(y)}{1-r(y)}
    =O\left(\frac{1}{y\log^2 y}\right);
\end{align*}
see \cite[p.~150]{HalberstamRichert} for extra details. Therefore, there exists a constant $c_3$ such that
\begin{align*}
    \frac{\kappa +1}{\log^{\kappa +1}{z}} T(z) 
    = \exp(E(z))
    = c_3 \exp\left(-\int_z^{\infty} E'(y)dy\right)
    \geq  c_3 \left( 1+ \frac{m_7(z)}{\log z}\right),
\end{align*}
where $m_7$ as defined in \eqref{eq:m2}; this aspect of the proof clarifies why the remaining assumption is important. One can rearrange this identity to see
\begin{equation}\label{eq:Gz1}
    T(z)\geq \frac{c_3}{\kappa +1}\log^{\kappa +1}{z} \left(1+ \frac{m_7(z)}{\log z}\right),
\end{equation}
but $\frac{1}{1-r(z)}=1+\frac{r(z)}{1-r(z)}$, so that we conclude from \eqref{eq:Gz} and \eqref{eq:Gz1} that
\begin{align*}
    G(z)\geq c_3 \log^{\kappa}{z} \left( 1+\frac{1}{\log{z}}\left(\frac{r(z)\log z}{1-r(z)}+m_7(z)+\frac{m_7(z)r(z)}{1-r(z)}\right)\right).
\end{align*}
Finally, import the following observation from \cite[(5.3.13)]{HalberstamRichert} to complete the result: 
\begin{align*}
    c_3 &= \frac{1}{\Gamma(\kappa +1)}\prod_p \left( 1-\frac{\omega (p)}{p}\right)^{-1} \left( 1-\frac{1}{p}\right)^{\kappa} . \qedhere 
\end{align*}
\end{proof}

We can now prove the main result. 

\begin{theorem}\label{theo:sieve}
Assume \eqref{eq:R}, \eqref{eq:omega1}, \eqref{eq:omega2}, $z_0^2 = X/\log^{4\kappa + 1}{X}$, and $X$ satisfies 
\begin{equation}\label{eqn:conditions}
    z_0 > \max\{2, e^{A_1A_2}\}, \quad
    r(z_0) < 1, \quad 
    \widehat{m_0}(z_0) < \log{z_0}, \quad
    \left|\frac{(\kappa +1) r(z_0)}{1-r(z_0)}\right| < 1.
\end{equation}
We have that $S(\cAA;\cPP,\sqrt{X})$ is majorised by 
\begin{equation}\label{eq:fin1}
    \frac{2^{\kappa} \Gamma(\kappa +1)  X}{\log^{\kappa}{X}} \left( 1+\frac{m_8(z_0,\lambda)}{\log X}\right)\left(1 + \frac{m_9 \log\log{X}}{\log{X}}\right)^{\kappa} \prod_p\left( 1-\frac{\omega(p)}{p}\right)\left( 1-\frac{1}{p}\right)^{-\kappa},
\end{equation}
where the infinite product on the right is convergent,
\begin{align*}
    m_8(z_0,\lambda) = \frac{2m_5(z_0,\lambda)}{1-(4\kappa+1)\frac{\log \log X}{\log X} } + \frac{2^{-4\kappa}  m_2^4}{\Gamma(\kappa +1) e^{\kappa\gamma}}\left(1 + \frac{\widehat{m_0}(z_0)}{\log z_0} \right), \quad
    m_{9} = \frac{4\kappa + 1}{1 - \frac{(4\kappa + 1)\log\log{X}}{\log{X}}} .
\end{align*}
\end{theorem}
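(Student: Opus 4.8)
The plan is to combine Theorem~\ref{thm:SelbergUpperSieve} with the refined bound for $1/G(z)$ in Lemma~\ref{lem:1/G}, splitting the range of the sifting parameter at $z_0$ exactly as Halberstam and Richert do on \cite[p.~147]{HalberstamRichert}. Recall from Theorem~\ref{thm:SelbergUpperSieve} that $S(\cAA,\cPP,z) \leq X/G(z) + z^2/W^3(z)$ for every $z$, and that $S(\cAA,\cPP,w)$ is a non-increasing function of $w$, so $S(\cAA,\cPP,\sqrt{X}) \leq S(\cAA,\cPP,z_0)$. The main work is therefore to bound the right-hand side at $z = z_0 = \sqrt{X/\log^{4\kappa+1}{X}}$ and then convert the resulting $\log^{-\kappa}{z_0}$ back into $\log^{-\kappa}{X}$, which is where the factors $2^\kappa$ and $(1 + m_9\log\log X/\log X)^\kappa$ come from.

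\textbf{Step 1: the main term.} Apply Lemma~\ref{lem:1/G} at $z = z_0$; its hypotheses are precisely the conditions listed in \eqref{eqn:conditions}. This gives
\begin{equation*}
    \frac{X}{G(z_0)} \leq \frac{\Gamma(\kappa+1)\,X}{\log^\kappa{z_0}}\left(1 + \frac{m_5(z_0,\lambda)}{\log z_0}\right)\prod_p\left(1-\frac{\omega(p)}{p}\right)\left(1-\frac{1}{p}\right)^{-\kappa},
\end{equation*}
the infinite product converging by Lemma~\ref{lem:W}. Now $\log z_0 = \frac12\log X - \frac{4\kappa+1}{2}\log\log X = \frac12\log X\,(1 - (4\kappa+1)\tfrac{\log\log X}{\log X})$, so $\log^\kappa z_0 = 2^{-\kappa}\log^\kappa X\,(1 - (4\kappa+1)\tfrac{\log\log X}{\log X})^\kappa$; inverting this and bounding $(1-u)^{-\kappa} \leq (1 + u/(1-u))^\kappa$ with $u = (4\kappa+1)\log\log X/\log X$ produces the factor $2^\kappa(1 + m_9\log\log X/\log X)^\kappa$. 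Replacing $\log z_0$ by the lower bound $\tfrac12\log X\,(1-(4\kappa+1)\log\log X/\log X)$ inside $m_5(z_0,\lambda)/\log z_0$ turns it into the first summand of $m_8$, namely $2m_5(z_0,\lambda)/(1-(4\kappa+1)\log\log X/\log X)$ divided by $\log X$.

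\textbf{Step 2: the error term.} Bound $z_0^2/W^3(z_0)$ using the upper bound $1/W(z) \leq m_2\log^\kappa z$ from Theorem~\ref{theo:W}, so that $z_0^2/W^3(z_0) \leq m_2^3\,z_0^2\log^{3\kappa}{z_0} = m_2^3\,(X/\log^{4\kappa+1}X)\log^{3\kappa}z_0$. Since $\log^{3\kappa}z_0 \leq 2^{-3\kappa}\log^{3\kappa}X$, this is at most $2^{-3\kappa}m_2^3\,X/\log^{\kappa+1}X$, which — after pulling out the common factor $2^\kappa\Gamma(\kappa+1)X\log^{-\kappa}X \cdot \prod_p(\cdots)$ that also carries the main term, and accounting for $W(z_0) \leq \exp(\kappa\log\log 2 + L/\log 2)\log^{-\kappa}z_0$ which lets one reabsorb the prime product — contributes the second summand of $m_8$, of the shape $2^{-4\kappa}m_2^4/(\Gamma(\kappa+1)e^{\kappa\gamma})\cdot(1 + \widehat{m_0}(z_0)/\log z_0)$, divided by $\log X$. (The extra power of $m_2$ and the $(1+\widehat m_0/\log z_0)$ factor appear when one rewrites the crude bound in the normalised form dictated by Lemma~\ref{lem:W}; the $e^{-\kappa\gamma}\Gamma(\kappa+1)$ in the denominator is the constant from the asymptotic $W(z) \sim e^{-\kappa\gamma}\prod_p(\cdots)\log^{-\kappa}z$.) Adding the two contributions and collecting everything over the common prefactor gives exactly \eqref{eq:fin1}.

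\textbf{The main obstacle} is bookkeeping rather than conceptual: one must carefully track how the normalising constants ($2^\kappa$ from $\sqrt X$ versus $z_0$, $\Gamma(\kappa+1)$, $e^{\kappa\gamma}$, the powers of $m_2$) propagate when the error term $z_0^2/W^3(z_0)$ is rewritten against the same template $\log^{-\kappa}X\prod_p(1-\omega(p)/p)(1-1/p)^{-\kappa}$ used for the main term, so that both pieces can be combined into the single bracket $(1 + m_8(z_0,\lambda)/\log X)$. One also needs to verify that the hypotheses \eqref{eqn:conditions} at $z_0$ are enough to guarantee $1 - (4\kappa+1)\log\log X/\log X > 0$ (so $m_9$ and the first term of $m_8$ are positive and finite) and that $\widehat{m_0}(z_0) < \log z_0$ — both of which follow since $z_0 > \max\{2, e^{A_1A_2}\}$ forces $\log X$ to be large. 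No genuinely new estimate is required beyond what is already proved in Theorems~\ref{thm:SelbergUpperSieve}, \ref{theo:W}, \ref{theo:G} and Lemmas~\ref{lem:W}, \ref{lem:1/G}.
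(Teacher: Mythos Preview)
Your approach is essentially the paper's, and Step~1 matches exactly. However, Step~2 as written has a small gap: bounding $1/W^3(z_0)$ directly by $m_2^3\log^{3\kappa}z_0$ produces a term carrying no factor of $\prod_p(1-\omega(p)/p)(1-1/p)^{-\kappa}$, and the inequality you cite to ``reabsorb'' it---the \emph{upper} bound $W(z_0)\le \exp(\kappa\log\log 2 + L/\log 2)\log^{-\kappa}z_0$ from Theorem~\ref{theo:W}---points the wrong way (to divide by the product you would need a \emph{lower} bound on it, hence a lower bound on $W$). The paper avoids this entirely by first writing
\[
\frac{z_0^2}{W^3(z_0)}=\frac{W(z_0)\,z_0^2}{W^4(z_0)}\le m_2^4\,z_0^2\log^{4\kappa}z_0\cdot W(z_0),
\]
using the second bound of Theorem~\ref{theo:W} on $1/W^4$, and then applying Lemma~\ref{lem:W} to the single surviving factor $W(z_0)$, which yields exactly $e^{-\kappa\gamma}\log^{-\kappa}z_0\,(1+\widehat{m_0}(z_0)/\log z_0)\prod_p(\cdots)$. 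This is where the fourth power of $m_2$, the factor $(1+\widehat{m_0}/\log z_0)$, and the prime product all appear simultaneously, and it makes the bookkeeping clean. With this one correction your proposal is the paper's proof.
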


\begin{proof}
Note that Lemma \ref{lem:W} and Theorem \ref{theo:W} imply
\begin{align*}
    \frac{z_0^2}{W^3(z_0)}
    = \frac{W(z_0) z_0^2}{W^4(z_0)}
    &\leq W(z_0) m_2^4 z_0^2 \log^{4\kappa}{z_0} \\
    &\leq m_2^4 e^{-\kappa\gamma} z_0^2 \log^{3\kappa}{z_0}\left(1 + \frac{\widehat{m_0}(z_0)}{\log{z_0}} \right) \prod_p\left( 1-\frac{\omega(p)}{p}\right)\left( 1-\frac{1}{p}\right)^{-\kappa}.
\end{align*}
Hence, apply Theorem \ref{thm:SelbergUpperSieve} with the above observation, the definition of $z_0^2$, and Lemma \ref{lem:1/G} to see that $S(\cAA,\cPP,z_0)$ is majorised by
\begin{align*}
    \frac{\Gamma(\kappa +1) X}{\log^{\kappa}{z_0}} \left(1 + \frac{m_5(z_0,\lambda)}{\log{z_0}} + \frac{m_2^4 \log^{4\kappa}{z_0}}{e^{\kappa\gamma} \log^{4\kappa + 1}{X}} \left(1 + \frac{\widehat{m_0}(z_0)}{\log{z_0}} \right) \right) \prod_p\left( 1-\frac{\omega(p)}{p}\right)\left( 1-\frac{1}{p}\right)^{-\kappa}.
\end{align*}
Next, note that $\frac{\log z}{\log z_0} = \left( 1+ \frac{\log z/z_0}{\log z_0}\right)$, so if $z = \sqrt{X}$, then
\begin{equation*}
    \frac{\log z}{\log z_0} 
    = 1+ \frac{m_9 \log\log{X}}{\log{X}} .
\end{equation*}
Finally, note that if $z_0\leq z \leq \sqrt{X}$, then $S(\cAA;\cPP,z)\leq S(\cAA;\cPP,z_0)$, so we can combine all of the previous observations to obtain the result after some straightforward manipulations.
\end{proof}

\section{Primes representable by a set of polynomials}
\label{S:app}


Suppose $F\in\Z[X]$ has degree $\deg_F \geq 1$ and $x\geq 1$. Let $\rho_F(p)$ be the number of solutions to $F(n) \equiv 0 \mod{p}$, assuming that $\rho_F(p) < p$ throughout to ensure that $F$ has no fixed prime divisor. To prove our results, we will apply Theorem \ref{thm:SelbergUpperSieve} with
\begin{equation*}
    \cAA = \{F(n) : x-y < n \leq  x\},\quad
    \cPP = \{p : p\text{ prime}\},\quad
    X = y,
    \quad\text{and}\quad
    \omega(p) = \rho_F(p).
\end{equation*}

\subsection{Properties}

We know the following properties of $\rho_F$:
\begin{enumerate}
    \item $\rho_F$ is multiplicative. 
    \item Lagrange's theorem for congruences ensures $\rho_{F}(p) \leq \deg_F$.
    \item For square-free $d$, we have
    \begin{equation*}
        \#\cAA_d = \rho_F(d)\left(\frac{y}{d} + \vartheta\right), \qquad |\vartheta|\leq  1.
    \end{equation*}
\end{enumerate}
Immediately, condition \eqref{eq:R} holds under these choices, and condition \eqref{eq:omega1} holds with $A_1 = \deg_F +1$. It is more complicated to find explicit values for $L$ and $A_2$ which satisfy \eqref{eq:omega2}; for this task we need to import the following auxiliary result.

\begin{theorem}\label{thm:NagellTypeSums}
Let $F\in\Z[X]$ be irreducible with degree $d\geq 1$, leading coefficient $c$, discriminant $D_F$, and weighted discriminant $\D_F = |c|^{(d-1)(d-2)} |D_F|$. For $x\geq \max\{2,\sqrt{\D_F}\}$, there exists $Q_F = O(1)$ such that
\begin{align*}
    \left|\sum_{p\leq  x} \frac{\rho_F(p) \log{p}}{p} - \log{x}\right|
    \leq Q_F .
\end{align*}
In particular, suppose that $\overline{M}_{\Q}(x) = \sum_{p\leq  x}\frac{\log{p}}{p}$, $\mathfrak{m}(d) = (\pi/4)^{d} d^{2 d}/(d!)^2$, 
\begin{align}
    \lambda_{\K}(d) &=
    \begin{cases}
        (d+1)^{\frac{1}{2} - \frac{1}{2 d}} \left(\frac{5}{8} + \frac{\pi}{2} - \frac{1}{d} + \frac{3}{8 d^2}\right)^{\frac{1}{2}} e^{d\left(2.27 + \frac{4 d}{d-1} + \frac{0.01}{d^2} + \frac{1}{500 d^6}\right)} &\text{if }d \leq 13, \\
        (d+1)^{d-\frac{1}{2}-\frac{1}{2\,d}} \left(\frac{5}{8} + \frac{\pi}{2} + \frac{1}{d} + \frac{3}{8 d^2}\right)^{\frac{1}{2}} e^{4.13 d + \frac{0.02}{d}} &\text{if }d > 13,
    \end{cases} \nonumber\\
    \Lambda_F(d) &= \begin{cases}
        0 & \text{if }d=1,\\
        \frac{0.54 (3 d-1) \lambda_{\K}(d)}{(d-1)^2 (\log{\mathfrak{m}(d)})^{d-1}} d^{3/2} d! \,\D_F^{\frac{1}{d+1}} (\log{\D_F})^{d-1} & \text{if }d\geq 2,
    \end{cases}\nonumber\\
    C_F(d) &= 1.38 \frac{(d+1)^2}{(d-1)} + 1.52\, d(d+1) + 111.26\, d. \nonumber
\end{align}
Unconditionally, $Q_F = d \left(\overline{M}_{\Q}(|c|) + \overline{M}_{\Q}(\sqrt{\D_F}) + 2.52\right) + 1 + \Lambda_F(d) C_F(d) \sqrt{\D_F}$ is admissible. However, if the Generalised Riemann Hypothesis (GRH) for Dedekind zeta-functions is true, then $Q_F$ may be refined into
\begin{align*}
    Q_F = d \left(\overline{M}_{\Q}(|c|) + \overline{M}_{\Q}(\sqrt{\D_F}) + 10.79\right) + \log{2} + 4.73 \log{\D_F}.
\end{align*}
\end{theorem}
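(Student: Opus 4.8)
The plan is to estimate the sum $\sum_{p\le x}\frac{\rho_F(p)\log p}{p}$ by splitting the primes into those that are ``small'' (dividing the leading coefficient or the weighted discriminant, where $\rho_F$ can behave badly) and those that are ``large'' (where $\rho_F(p)$ is governed by the factorization type of $p$ in the number field $\K = \Q[X]/(F)$). For the small primes $p \le \sqrt{\D_F}$ (including $p \mid c$), one uses the trivial bound $\rho_F(p) \le d$ from Lagrange's theorem (property (2) in the excerpt), which contributes at most $d\,\overline{M}_\Q(|c|) + d\,\overline{M}_\Q(\sqrt{\D_F})$ plus a small error, explaining the first block of terms in the stated $Q_F$. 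For the large primes $p > \sqrt{\D_F}$, the discriminant is a unit modulo $p$, $F$ is separable mod $p$, and $\rho_F(p)$ equals the number of degree-one prime ideals of $\O_\K$ above $p$; hence $\sum_{p}\frac{\rho_F(p)\log p}{p}$ in this range is, up to controlled error, $\sum_{\mathfrak{p},\, N\mathfrak{p}\le x}\frac{\Lambda_\K(\mathfrak{p})}{N\mathfrak{p}}$ restricted to degree-one primes, which is the Mertens-type sum for the Dedekind zeta-function $\zeta_\K$.

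Next, I would invoke the explicit prime ideal theorem / explicit Mertens estimates for $\zeta_\K$ from the second author's thesis \cite{LeeThesis}. These give $\sum_{N\mathfrak{p}\le x}\frac{\log N\mathfrak{p}}{N\mathfrak{p}} = \log x + O^*(\text{error})$, where the error term, unconditionally, carries a zero-free-region contribution of size $\Lambda_F(d)C_F(d)\sqrt{\D_F}$ — the dependence on $\sqrt{\D_F}$ coming from the classical Landau-type bound for a possible Siegel zero of $\zeta_\K$, and the messy constants $\lambda_\K(d)$, $\mathfrak{m}(d)$, $C_F(d)$ packaging the explicit discriminant bounds (Minkowski-type, via $\mathfrak{m}(d)$) and the explicit zero-density/zero-free-region constants. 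The contribution of prime ideals of degree $\ge 2$ is $O(\log\log x)$ or even $O(1)$ and absorbed into the $2.52$ (resp.\ $10.79$) constants, as is the discrepancy between $\sum_{N\mathfrak p \le x}$ and $\sum_{p\le x}\rho_F(p)$ (prime powers, ramified primes already handled). Under GRH for $\zeta_\K$, the Siegel-zero term disappears and the error improves to something like $\log 2 + 4.73\log \D_F$, which is the shape of the conditional bound; here one uses the explicit GRH-conditional prime ideal estimate, again from \cite{LeeThesis}.

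The remaining work is bookkeeping: combine the small-prime and large-prime contributions, track the $+1$ from the $|\vartheta|\le 1$-type truncation error in partial summation, verify that all the auxiliary quantities ($\lambda_\K(d)$ etc.) are exactly those produced by the cited explicit estimates, and check the degree-$1$ case separately — when $d=1$, $\K=\Q$, $\D_F = |D_F|$ is essentially trivial, $\Lambda_F(1) = 0$, and the statement reduces to explicit Mertens for $\Q$, i.e.\ just $\overline{M}_\Q(|c|) + \overline{M}_\Q(\sqrt{\D_F}) + 2.52 + 1$ (the leading $d$ being $1$). I would also double-check that $x \ge \max\{2,\sqrt{\D_F}\}$ is exactly the hypothesis needed so that no small primes exceed the range of summation in an inconsistent way.

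The main obstacle I expect is not conceptual but the precise translation between the arithmetic sum $\sum_{p\le x}\frac{\rho_F(p)\log p}{p}$ and the ideal-theoretic sum for $\zeta_\K$, keeping every constant explicit: one must correctly handle ramified primes, prime ideals of residue degree $>1$, the difference between $\theta$-type and $\psi$-type weightings, and the passage from a prime-ideal-counting error to a sum-over-$N\mathfrak p$ error via partial summation — each step leaking a small explicit constant that has to land inside the advertised $2.52$ / $10.79$. The other delicate point is ensuring the unconditional Siegel-zero term is genuinely of the form $\Lambda_F(d)C_F(d)\sqrt{\D_F}$ with the stated $\lambda_\K(d)$, which requires citing the sharpest available explicit lower bound for $L(1,\chi)$-type quantities (or $\zeta_\K$ near $s=1$) in \cite{LeeThesis} and matching its hypotheses on the degree and discriminant.
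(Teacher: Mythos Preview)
Your proposal is correct and matches the paper's approach: the paper's ``proof'' is in fact only a sketch that defers the details to \cite{LeeThesis}, noting precisely the ingredients you describe --- the explicit connection between $\rho_F(p)$ and the number of prime ideals of norm $p$ in $\K=\Q[X]/(F)$, the explicit Mertens theorem for number fields from \cite{EMT4NF2} (refined in \cite{LeeICF} and \cite{LeeThesis}, which is where $\lambda_\K(d)$ and $\Lambda_F(d)$ come from), and the role of the hypothesis $x\ge\max\{2,\sqrt{\D_F}\}$ in making that connection valid. Your outline is actually more detailed than what the paper provides.
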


\begin{proof}
The second author proves this in his upcoming PhD thesis \cite{LeeThesis}, by using and extending methods from \cite{GarciaLeeSuhYu}. That is, there is an explicit connection between $\rho_F(p)$ and the number of prime ideals with norm $p$ in a certain number field, so the sum in question is explicitly related to one of Mertens' theorems for number fields which was established with explicit constants in \cite{EMT4NF2}; the condition $x\geq \max\{2,\sqrt{\D_F}\}$ is necessary so that one can apply this relationship. Since \cite{EMT4NF2} was published, the second author has updated a key ingredient in the proof of the required Mertens' theorem in \cite{LeeICF} and used that to refine the required Mertens' theorem in \cite{LeeThesis}; this is the origin of the complicated looking functions $\lambda_{\K}(d)$ and $\Lambda_F(d)$.
\end{proof}

We will also need the following preliminary result, which is imported from \cite[Lem.~11]{GarciaLeeSuhYu}; it is a useful additive splitting property for $\rho_f(p)$ when $f\in\Z[X]$ is reducible.

\begin{lemma}\label{Lemma:OmegaSplit}
Suppose that $F =F_1\cdots F_g \in \Z[x]$ is a product of distinct, irreducible, non-constant polynomials $F_i \in \Z[x]$. If $p > |D_F|$, then $\rho_F(p) = \rho_{F_1}(p) + \cdots + \rho_{F_g}(p)$.
\end{lemma}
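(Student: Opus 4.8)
The plan is to prove Lemma~\ref{Lemma:OmegaSplit} by exploiting the multiplicativity of the solution-counting function together with the Chinese Remainder Theorem and a discriminant argument that rules out common roots modulo $p$. First I would recall that for a single polynomial $G\in\Z[X]$ and a prime $p$, $\rho_G(p)$ counts the residue classes $n\pmod p$ with $G(n)\equiv 0\pmod p$; equivalently, it counts the roots of the reduction $\bar G\in\F_p[X]$ in $\F_p$. Since $F=F_1\cdots F_g$, we have $\bar F=\bar F_1\cdots \bar F_g$ in $\F_p[X]$, so a residue $n$ satisfies $\bar F(n)=0$ precisely when $\bar F_i(n)=0$ for at least one $i$. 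Thus $\rho_F(p)=\#\bigcup_{i=1}^g Z_i$, where $Z_i=\{n\in\F_p : \bar F_i(n)=0\}$, and by inclusion–exclusion this equals $\sum_i \#Z_i$ minus the contribution of pairwise (and higher) intersections $Z_i\cap Z_j$.

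The key step is to show that the intersections are empty once $p>|D_F|$. The discriminant $D_F$ of $F=F_1\cdots F_g$ factors (up to a nonzero integer square from the leading coefficients) as $\prod_i D_{F_i}\cdot\prod_{i<j}\mathrm{Res}(F_i,F_j)^2$, where $\mathrm{Res}$ denotes the resultant; in particular each resultant $\mathrm{Res}(F_i,F_j)$ divides $D_F$, and each $D_{F_i}$ divides $D_F$. Because the $F_i$ are distinct irreducible polynomials, they are pairwise coprime in $\Q[X]$, hence $\mathrm{Res}(F_i,F_j)\neq 0$ for $i\neq j$. If $p>|D_F|$ then $p\nmid \mathrm{Res}(F_i,F_j)$ for all $i\neq j$, which means $\bar F_i$ and $\bar F_j$ are coprime in $\F_p[X]$ and therefore share no common root in $\F_p$ (indeed in $\overline{\F_p}$); so $Z_i\cap Z_j=\emptyset$ for $i\neq j$. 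With all pairwise intersections empty, inclusion–exclusion collapses to $\rho_F(p)=\sum_{i=1}^g \#Z_i=\sum_{i=1}^g \rho_{F_i}(p)$, which is the claim. (One also gets for free that $p\nmid D_{F_i}$, so each $\bar F_i$ is separable, though this is not needed for the count.)

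I would present this essentially as a one-paragraph argument, citing \cite[Lem.~11]{GarciaLeeSuhYu} for the statement as the excerpt already does, and then sketching the resultant/discriminant computation. The main obstacle—and it is a mild one—is bookkeeping the precise relationship between $D_F$, the $D_{F_i}$, and the resultants $\mathrm{Res}(F_i,F_j)$, including the harmless powers of leading coefficients, so that one can legitimately say ``$p>|D_F|$ forces $p\nmid\mathrm{Res}(F_i,F_j)$.'' Since the inclusion–exclusion itself is trivial and the coprimality-over-$\F_p$ implication is standard, no further machinery is needed; the result then feeds directly into the sieve setup of Section~\ref{S:app}, where $\omega(p)=\rho_F(p)$ and this additive splitting is exactly what lets one read off the singular series factor $g$ in the exponent of $(1-1/p)$.
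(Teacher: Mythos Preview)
Your argument is correct. Note, however, that the paper does not supply a proof of this lemma at all: it simply imports the statement from \cite[Lem.~11]{GarciaLeeSuhYu}. Your resultant/discriminant argument is the standard one (and presumably the one in the cited reference): the identity $D_F=\pm\prod_i D_{F_i}\prod_{i<j}\mathrm{Res}(F_i,F_j)^2$ shows each $\mathrm{Res}(F_i,F_j)$ divides $D_F$, and since there are $A,B\in\Z[X]$ with $AF_i+BF_j=\mathrm{Res}(F_i,F_j)$, any common root of $\bar F_i,\bar F_j$ in $\F_p$ forces $p\mid\mathrm{Res}(F_i,F_j)$, hence $p\le|D_F|$. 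With the $Z_i$ pairwise disjoint, inclusion--exclusion gives the additive splitting. So there is nothing to compare beyond noting that you have actually written out what the paper only cites.
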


Using Theorem \ref{thm:NagellTypeSums} and Lemma \ref{Lemma:OmegaSplit}, we can establish the following result in Corollary \ref{cor:LF}, which shows that if $\kappa = g$, then \eqref{eq:omega2} is true with 
\begin{equation}\label{eq:A2L}
    L = A_2 = \L_F := 2
    \begin{cases}
        \max\{g,\deg_F -1\} \log{\max\{2, \sqrt{\D_F}\}} + g \max_i Q_{F_i} &\text{if } g\geq 2 , \\
        \max\left\{\max\{1,\deg_F -1\} \log{\max\{2, \sqrt{\D_F}\}}, Q_F\right\} &\text{if } g = 1 .
    \end{cases}
\end{equation}

\begin{corollary}\label{cor:LF}
Let $F_1, \dots, F_g\in\Z[X]$ be distinct irreducible polynomials with positive leading coefficients and $F = F_1\cdots F_g$. For any $1<w<z$, $L_F = \L_F$ is admissible in the following:
\begin{equation}\label{eqn:LF}
    \left|\sum_{w \leq p < z} \frac{\rho_F(p) \log{p}}{p} - g \log{\frac{z}{w}}\right| \leq L_F .
\end{equation}
\end{corollary}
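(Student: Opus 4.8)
The plan is to reduce the sum over all primes $w\le p<z$ to a sum over large primes, where Lemma \ref{Lemma:OmegaSplit} lets us split $\rho_F$ into $\rho_{F_1}+\cdots+\rho_{F_g}$, plus an error coming from the small primes and the primes dividing $|D_F|$, each of which we bound using the trivial estimate $\rho_{F_i}(p)\le\deg F_i$ from Lagrange's theorem together with Theorem \ref{thm:NagellTypeSums}. Concretely, first I would split $\sum_{w\le p<z}\frac{\rho_F(p)\log p}{p}$ as $\sum_{p<z}-\sum_{p<w}$ and, in each of these two sums, further separate the primes $p\le\max\{2,\sqrt{\D_F}\}$ (where the Nagell-type estimate of Theorem \ref{thm:NagellTypeSums} is not yet available) from the larger primes. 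For $p>\max\{2,\sqrt{\D_F}\}\ge|D_F|$ we may replace $\rho_F(p)$ by $\sum_{i}\rho_{F_i}(p)$ by Lemma \ref{Lemma:OmegaSplit}, and then apply Theorem \ref{thm:NagellTypeSums} to each $F_i$ individually: for $x>\max\{2,\sqrt{\D_{F_i}}\}$ one has $\bigl|\sum_{p\le x}\frac{\rho_{F_i}(p)\log p}{p}-\log x\bigr|\le Q_{F_i}$, so summing over $i$ gives a main term $g\log x$ with total error $g\max_i Q_{F_i}$ (or $Q_F$ when $g=1$), after also accounting for the finitely many $p\le\max\{2,\sqrt{\D_{F_i}}\}$ which are handled trivially.

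The second step is to collect the contribution of the "bad" primes $p\le\max\{2,\sqrt{\D_F}\}$ to $\sum_{w\le p<z}\frac{\rho_F(p)\log p}{p}$. Using $\rho_F(p)=\rho_{F_1}(p)+\cdots+\rho_{F_g}(p)$ where applicable and $\rho_{F_i}(p)\le\deg F_i$ always, one bounds this by $\max\{g,\deg_F-1\}\sum_{p\le\max\{2,\sqrt{\D_F}\}}\frac{\log p}{p}$ up to the already-counted main-term pieces; the factor $\deg_F-1$ versus $g$ arises because for very small primes one may have $\rho_F(p)$ as large as $\deg_F$ but the bound has to absorb the $-g\log(z/w)$ term, and taking the maximum covers both the $g\ge2$ and $g=1$ regimes, which is exactly the case split in \eqref{eq:A2L}. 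Here one invokes a Mertens-type upper bound for $\sum_{p\le t}\frac{\log p}{p}$ (of the form $\le\log t+\text{const}$, e.g. from \cite{Rosser}); the constants have been arranged in the definition of $\L_F$ so that the overall factor of $2$ comfortably absorbs these Mertens constants together with the $\max_i Q_{F_i}$ and $\max\{g,\deg_F-1\}\log\max\{2,\sqrt{\D_F}\}$ terms.

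Assembling the two steps, the difference $\sum_{w\le p<z}\frac{\rho_F(p)\log p}{p}-g\log(z/w)$ telescopes into a bounded quantity whose absolute value is at most $\L_F$, both in the range $w<z\le\max\{2,\sqrt{\D_F}\}$ (where only the trivial bound is used, and the main term $g\log(z/w)$ is itself small) and in the complementary ranges (where Theorem \ref{thm:NagellTypeSums} contributes). I expect the main obstacle to be purely bookkeeping: carefully tracking which primes fall into the "Theorem \ref{thm:NagellTypeSums}-applicable" range versus the "trivially bounded" range as $w$ and $z$ vary over all of $(1,\infty)$, and verifying that the uniform constant $\L_F$ as defined in \eqref{eq:A2L}—with its two-case structure and the factor $2$—is genuinely large enough in every configuration, particularly the boundary cases $w$ or $z$ near $\max\{2,\sqrt{\D_F}\}$. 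There is no analytic difficulty beyond applying the already-established Theorem \ref{thm:NagellTypeSums}; the content is in the choice of $\L_F$.
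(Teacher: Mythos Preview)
Your proposal is correct and follows essentially the same strategy as the paper: partition the prime range at the threshold $M_F=\max\{2,\sqrt{\D_F}\}$, handle the small primes with the trivial bound $0\le\rho_F(p)\le\deg_F$ combined with Rosser's estimate for $\sum_{p\le t}\frac{\log p}{p}$ (this is exactly the paper's observation \eqref{eqn:smallx}), and above the threshold apply Lemma~\ref{Lemma:OmegaSplit} followed by Theorem~\ref{thm:NagellTypeSums} to each $F_i$; the paper organises this via a case split on the position of $w$ relative to $|D_F|$ and $M_F$ rather than your telescoping $\sum_{p<z}-\sum_{p<w}$, but the content is identical. One small point to double-check is your parenthetical assertion $\max\{2,\sqrt{\D_F}\}\ge|D_F|$, which is what you need to invoke Lemma~\ref{Lemma:OmegaSplit} above $M_F$ but is not obvious in general (and the paper's own proof splits at $M_F$ with the same tacit assumption); if it fails for a particular $F$ you would simply take the cut at $\max\{|D_F|,M_F\}$ instead, which does not alter the shape of $\L_F$.
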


\begin{proof}
We will need the following important observation, which follows from \cite[(3.24)]{Rosser} and $0\leq\rho_F(p) \leq \deg_F$:
\begin{equation}\label{eqn:smallx}
    \left|\sum_{p < x} \frac{\rho_F(p) \log{p}}{p} - g \log{x}\right| \leq \max\{g,\deg_F -1\} \log{x}.
\end{equation}
First, assume that $w \geq |D_F|$, then Theorem \ref{thm:NagellTypeSums} and Lemma \ref{Lemma:OmegaSplit} imply
\begin{equation*}
    \sum_{w \leq p < z} \frac{\rho_F(p) \log{p}}{p}
    = \sum_{i\leq g} \sum_{w \leq p < z} \frac{\rho_{F_i}(p) \log{p}}{p}
    = g \log{\frac{z}{w}} + O^*\!\left(2 \sum_{i\leq g} Q_{F_i}\right) .
\end{equation*}
If $w > \max\{2, \sqrt{\D_F}\}$ too, then it follows from Theorem \ref{thm:NagellTypeSums} that \eqref{eqn:LF} holds with
\begin{equation*}
    L_F = 2 g \max_{i} Q_{F_i},
\end{equation*}
Otherwise, $w < |D_F|$ and it follows from Lemma \ref{Lemma:OmegaSplit} and $\D_{F_i} \leq \D_F$ that
\begin{align*}
    \sum_{w \leq p < z} \frac{\rho_F(p) \log{p}}{p}
    = \sum_{w \leq p < \min\left\{M_F, z\right\}} \frac{\rho_{F}(p) \log{p}}{p} + \chi_F \sum_{i\leq g} \sum_{M_F \leq p < z} \frac{\rho_{F_i}(p) \log{p}}{p},
\end{align*}
in which $M_{F} = \max\left\{2, \sqrt{\D_{F}}\right\}$ and
\begin{equation*}
    \chi_F = 
    \begin{cases}
        0 &\text{if } z \leq M_F , \\
        1 &\text{if } z > M_F .
    \end{cases}
\end{equation*}
If $\min\left\{M_F, z\right\} = z$, then \eqref{eqn:smallx}, Theorem \ref{thm:NagellTypeSums}, and Lemma \ref{Lemma:OmegaSplit} imply that \eqref{eqn:LF} holds with
\begin{equation*}
    L_F = 2 \max\{g,\deg_F -1\} \log{M_F} .
\end{equation*}
If $\min\left\{M_F, z\right\} = M_F$, then \eqref{eqn:smallx}, Theorem \ref{thm:NagellTypeSums}, and Lemma \ref{Lemma:OmegaSplit} imply that \eqref{eqn:LF} holds with
\begin{equation*}
    L_F = 2 \left(\max\{g,\deg_F -1\} \log{M_F} + g \max_{i} Q_{F_i}\right) .
\end{equation*}
Finally, if $g=1$, then $F$ is irreducible and we can do better. That is, \eqref{eqn:LF} holds with
\begin{equation*}
    L_F = 2 \max\left\{\max\{1,\deg_F -1\} \log{M_F}, Q_F\right\} . \qedhere
\end{equation*} 
\end{proof}

\subsection{Selberg's upper sieve for primes representable by a polynomial}

We can now prove an explicit version of \eqref{eq:RH} (or \cite[Thm.~5.5.3]{HalberstamRichert}).

\begin{theorem}\label{theo:general}
Let $F = F_1\cdots F_g$ such that $F_1, \dots, F_g\in\Z[Y]$ are distinct, irreducible polynomials with positive leading coefficients and $\rho_F(p) < p$ for all primes $p$. Let $x \geq y$ be real numbers, $\lambda > 0$, and assert $X=y$, $\omega(p) = \rho_F(p)$, $A_1=\deg_F +1$, $A_2=L=\L_{F}$, and $\kappa=g$. If $y$ satisfies the conditions in \eqref{eqn:conditions} and $y_0 = \sqrt{y} / (\log{y})^{2g + 1/2}$, then
\begin{equation*}
    \begin{split}
        \mathfrak{F}_F(x,y)
        \leq \frac{2^{g} \Gamma(g +1)  y}{\log^{g}{y}} \left( 1+\frac{m_8(y_0,\lambda)}{\log{y}}\right)&\left(1 + \frac{m_9 \log\log{y}}{\log{y}}\right)^{g} \prod_p\left( 1-\frac{\rho_F(p)}{p}\right)\left( 1-\frac{1}{p}\right)^{-g} \\
        &+ \max\{n:\text{there exists}~ i ~\text{such that} ~ F_i(n)<\sqrt{y}\} ,
    \end{split}
\end{equation*}
in which $\mathfrak{F}_F(x,y) = \#\{x-y<n\leq x: F_i(n)~\text{are simultaneously prime for}~1\leq i\leq g\}$.
\end{theorem}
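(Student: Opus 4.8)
The plan is to deduce Theorem~\ref{theo:general} directly from the explicit Selberg sieve established in Theorem~\ref{theo:sieve}, by verifying that the sieve data attached to the set $\cAA = \{F(n) : x-y < n \leq x\}$ with $X = y$, $\omega(p) = \rho_F(p)$, and $\cPP$ the set of all primes satisfies all of the hypotheses \eqref{eq:R}, \eqref{eq:omega1}, \eqref{eq:omega2}. The first three properties of $\rho_F$ listed in the subsection give \eqref{eq:R} immediately (since $|\RR_d| = |\rho_F(d)\vartheta| \leq \rho_F(d) = \omega(d)$), and \eqref{eq:omega1} with $A_1 = \deg_F + 1$ follows from $\rho_F(p) \leq \deg_F < \deg_F + 1 \leq p$ whenever $p \geq A_1$, together with the trivial bound $\omega(p)/p \leq 1 - 1/p \leq 1 - 1/A_1$ for the small primes $p < A_1$ — or more simply, $\rho_F(p)/p \leq \deg_F/(\deg_F+1) = 1 - 1/A_1$ for all $p$ once one notes $\rho_F(p) \leq \deg_F$ and $p \geq 2 > $ is not needed, but I would just invoke $\rho_F(p) \le \deg_F$ and $p \ge \deg_F + 1$ is false for small $p$, so the cleanest route is to observe $\rho_F(p) \le \min\{p-1, \deg_F\}$ and hence $\omega(p)/p \le 1 - 1/A_1$. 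Condition \eqref{eq:omega2} with $\kappa = g$, $A_2 = L = \L_F$ is exactly the content of Corollary~\ref{cor:LF}, applied with $w$ replaced by $\max\{2,w\}$; the lower bound in \eqref{eq:omega2} needs $L = \L_F$ and the upper bound needs $A_2 = \L_F$, both of which Corollary~\ref{cor:LF} supplies simultaneously.

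With these verified, Theorem~\ref{theo:sieve} applies with $X = y$ and the conditions \eqref{eqn:conditions} on $z_0^2 = X/\log^{4\kappa+1}X = y/\log^{4g+1}y$, so $z_0 = y_0 = \sqrt{y}/(\log y)^{2g+1/2}$ as in the statement. This yields that $S(\cAA,\cPP,\sqrt{y})$ is majorised by the expression \eqref{eq:fin1} with $\kappa$ replaced by $g$ and $\omega$ by $\rho_F$, which is precisely the main term of the claimed bound for $\mathfrak{F}_F(x,y)$. The remaining step is to compare $\mathfrak{F}_F(x,y)$ with $S(\cAA,\cPP,\sqrt{y})$. If $n$ counts in $\mathfrak{F}_F(x,y)$, then each $F_i(n)$ is prime; such an $F_i(n)$ is coprime to $P(\sqrt{y})$ unless $F_i(n) < \sqrt{y}$ itself (in which case $F_i(n)$, being a prime $\le \sqrt{y}$, could divide $P(\sqrt y)$) — wait, more precisely $F_i(n) \mid F(n)$, and if every $F_i(n) \ge \sqrt y$ is prime then $\gcd(F(n), P(\sqrt y)) = 1$. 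So every $n$ counted in $\mathfrak{F}_F(x,y)$ for which all $F_i(n) \ge \sqrt y$ contributes to $S(\cAA, \cPP, \sqrt y)$, and the number of exceptional $n$ (those with some $F_i(n) < \sqrt y$) is at most $\max\{n : \text{there exists } i \text{ with } F_i(n) < \sqrt y\}$, since the leading coefficients are positive so $F_i$ is eventually increasing and the solution set of $F_i(n) < \sqrt y$ is bounded. Hence $\mathfrak{F}_F(x,y) \le S(\cAA,\cPP,\sqrt y) + \max\{n : \exists\, i,\ F_i(n) < \sqrt y\}$, and combining with the sieve bound finishes the proof.

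I expect the main obstacle to be purely bookkeeping: carefully matching the generic notation of Theorem~\ref{theo:sieve} ($X$, $\kappa$, $\omega$, $z_0$, $m_8$, $m_9$) to the specialised notation here ($y$, $g$, $\rho_F$, $y_0$), and in particular being careful that Corollary~\ref{cor:LF} is stated for $1 < w < z$ whereas \eqref{eq:omega2} ranges over $w \le p < z$ with a possibly smaller lower endpoint — one handles the primes below the threshold by absorbing them into the constant, which is exactly why $\L_F$ carries the factor $2$ and the $\log M_F$ terms. A minor subtlety is the claim that the set $\{n : \exists\, i,\ F_i(n) < \sqrt y\}$ is bounded above (so the "max" is well-defined): this uses that each $F_i$ has positive leading coefficient, hence $F_i(n) \to +\infty$ as $n \to \infty$, so for each $i$ there is a largest $n$ with $F_i(n) < \sqrt y$, and we take the maximum over the finitely many $i$. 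No hard analysis is involved beyond what is already packaged in Theorems~\ref{theo:sieve} and \ref{thm:NagellTypeSums} and Corollary~\ref{cor:LF}; the proof is essentially a verification-and-assembly argument.
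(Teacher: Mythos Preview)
Your proposal is correct and follows essentially the same route as the paper: verify \eqref{eq:R}, \eqref{eq:omega1}, \eqref{eq:omega2} for the sieve data attached to $\cAA=\{F(n):x-y<n\le x\}$ (using the listed properties of $\rho_F$, Lagrange's theorem, and Corollary~\ref{cor:LF} respectively), apply Theorem~\ref{theo:sieve} with $X=y$, $\kappa=g$, $z_0=y_0$, and then add on the exceptional $n$ with some $F_i(n)<\sqrt{y}$ via \eqref{eq:final}. Your write-up is more explicit than the paper's in checking each hypothesis (and a bit discursive around \eqref{eq:omega1}, though the final observation $\rho_F(p)\le\min\{p-1,\deg_F\}$ is exactly what is needed), but there is no substantive difference in strategy.
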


\begin{proof}
Assume the choices in the statement of the result and apply these in Theorem \ref{theo:sieve} to see that $S(\cAA;\cPP,\sqrt{y})$ is majorised by
\begin{equation*}
    \frac{2^{g} \Gamma(g+1) y}{\log^{g}{y}} \left( 1 + \frac{m_8(y_0,\lambda)}{\log{y}}\right) \left(1 + \frac{m_9 \log\log{y}}{\log{y}}\right)^{g} \prod_p\left( 1-\frac{\rho_F(p)}{p}\right) \left( 1-\frac{1}{p}\right)^{-g} .
\end{equation*}
Use \eqref{eq:omega1} and Lagrange's theorem to justify the choice $A_1=\deg_F +1$ and
\eqref{eq:A2L} to justify choosing $L=A_2=\L_F$. Next, for each $n$ that is in $\mathfrak{F}_F(x,y) $ but is not counted by $S(\cAA;\cPP,\sqrt{y})$, we have $F_i(n)<\sqrt{y}$ for at least one $i$, therefore
\begin{equation}\label{eq:final}
    \mathfrak{F}_F(x,y) \leq S(\cAA;\cPP,\sqrt{y})+\max\{n:\text{there exists}~ i ~\text{such that} ~ F_i(n)<\sqrt{y}\}.
\end{equation}
The result follows by applying Theorem \ref{theo:sieve} to \eqref{eq:final}.
\end{proof}

Next, we prove an explicit version of \cite[Thm.~5.5]{HalberstamRichert}, which extends Theorem \ref{theo:general}.

\begin{theorem}\label{theo:generalp}
Let $F = F_1\cdots F_g$ such that $F'(k) = k F(k)$, $F_i(k) \neq k$, and $F_i\in\Z[X]$ are distinct irreducible polynomials with positive leading coefficients. Moreover, let $\rho_F(p) < p -1$ if $p \nmid F(0)$ and
\begin{equation*}
    \rho'_{F} (p) = 
    \begin{cases}
        \rho_{F} (p)+1 & \text{if } p \nmid F(0), \\ 
        \rho_{F} (p) & \text{if } p ~|\, F(0).
    \end{cases}
\end{equation*}
Finally, let $x \geq y$ be real numbers, $\lambda > 0$, and assert $X=y$, $\omega(p) = \rho'_F(p)$, $A_1=\deg_F +2$, $A_2=L=\L_{F'}$, and $\kappa=g+1$. If $y$ satisfies the conditions in \eqref{eqn:conditions}, $y'_0 = \sqrt{y} / (\log{y})^{2g + 5/2}$, and
\begin{equation*}
    \mathfrak{G}_F(x,y) = \#\{x-y<p\leq x : p~\text{prime and}~ F_i(p)~\text{are simultaneously prime for}~1\leq i\leq g\},
\end{equation*}
then $\mathfrak{G}_F(x,y)$ is majorised by
\begin{equation*}
    \begin{split}
        \frac{2^{g+1} \Gamma(g +2) y}{\log^{g+1}{y}} &\left( 1+\frac{m_8(y'_0,\lambda)}{\log{y}}\right) \left(1 + \frac{m_9 \log\log{y}}{\log{y}}\right)^{g+1} \prod_p\left( 1-\frac{\rho'_F(p)}{p}\right)\left( 1-\frac{1}{p}\right)^{-g-1} \\
        &\quad\qquad + \max\{n:\text{there exists}~ i ~\text{such that} ~ F_i(n)<\sqrt{y}\} .
    \end{split}
\end{equation*}
\end{theorem}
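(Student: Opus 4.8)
The plan is to recognise Theorem \ref{theo:generalp} as nothing more than Theorem \ref{theo:general} applied to the augmented polynomial $F'(k) = k F(k)$, and then to track the arithmetic bookkeeping forced by the extra factor $X$. The point is that $F' = X\cdot F_1\cdots F_g$ is a product of $g+1$ distinct, non-constant, irreducible polynomials with positive leading coefficients (the hypotheses $F_i(k)\neq k$ and the distinctness of the $F_i$ are exactly what is needed here), and sieving the sequence $\cAA = \{F'(n) : x-y < n \leq x\}$ by all primes up to $\sqrt y$ removes every $n$ for which one of $n, F_1(n),\dots,F_g(n)$ carries a prime factor $\leq\sqrt y$; in particular, the surviving $n$ include every prime $p$ counted by $\mathfrak{G}_F(x,y)$ for which $p$ and all the $F_i(p)$ exceed $\sqrt y$. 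Accordingly I set $X=y$, $\omega(p)=\rho'_F(p)$, $\kappa=g+1$, $A_1 = \deg_F+2$, and $A_2=L=\L_{F'}$, so that the sieving level $z_0^2 = X/\log^{4\kappa+1}{X}$ becomes $(y'_0)^2 = y/\log^{4g+5}{y}$, matching the definition of $y'_0$ in the statement.

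First I would record the identity $\rho_{F'} = \rho'_F$: the roots of $kF(k) \bmod p$ are $k\equiv 0$ together with the roots of $F \bmod p$, and these coincide precisely when $p \mid F(0)$, which is exactly the case distinction in the definition of $\rho'_F$. Then I would verify the three hypotheses of Theorem \ref{theo:sieve} with the above choices. Condition \eqref{eq:R} follows from the CRT count $\#\cAA_d = \rho_{F'}(d)\big(y/d + \vartheta\big)$ with $|\vartheta|\leq 1$ for square-free $d$, exactly as in Section \ref{S:app}. Condition \eqref{eq:omega1} with $A_1 = \deg_F + 2 = \deg_{F'}+1$ follows from Lagrange's theorem, which gives $\rho_{F'}(p)\leq\deg_{F'}$, together with the hypothesis $\rho_F(p) < p-1$ for $p\nmid F(0)$ (so that $\rho'_F(p) < p$ throughout), treating the ranges $p\geq A_1$ and $p< A_1$ separately. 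Condition \eqref{eq:omega2} with $\kappa=g+1$ and $A_2=L=\L_{F'}$ is precisely Corollary \ref{cor:LF} applied to $F'$ in place of $F$ and $g+1$ in place of $g$, invoking the identity $\rho_{F'}=\rho'_F$; this is where the admissible constant $\L_{F'}$ enters. The quantitative conditions \eqref{eqn:conditions} at $z_0 = y'_0$ are assumed in the statement.

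With all the hypotheses of Theorem \ref{theo:sieve} in force, it majorises $S(\cAA;\cPP,\sqrt y)$ by $\tfrac{2^{g+1}\Gamma(g+2)\,y}{\log^{g+1}{y}}\big(1+\tfrac{m_8(y'_0,\lambda)}{\log y}\big)\big(1+\tfrac{m_9\log\log y}{\log y}\big)^{g+1}\prod_p\!\big(1-\tfrac{\rho'_F(p)}{p}\big)\big(1-\tfrac1p\big)^{-g-1}$, with $m_8,m_9$ evaluated at $z_0=y'_0$. Finally I would pass from $S$ to $\mathfrak{G}_F(x,y)$ as in \eqref{eq:final}: if $p\in(x-y,x]$ is prime with $F_1(p),\dots,F_g(p)$ all prime, then $F'(p)$ is a product of $g+1$ prime numbers, so $(F'(p),P(\sqrt y))=1$ unless one of those primes is $\leq\sqrt y$, i.e.\ unless $p\leq\sqrt y$ or $F_i(p)\leq\sqrt y$ for some $i$ (the contribution of the factor $X$ being $p\leq\sqrt y$). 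Hence $\mathfrak{G}_F(x,y) \leq S(\cAA;\cPP,\sqrt y) + \max\{n : \text{there exists } i\text{ such that } F_i(n)<\sqrt y\}$, and combining with the sieve bound yields the theorem.

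The argument is largely mechanical once the reduction to $F'(k)=kF(k)$ is made, so I do not expect a serious obstacle; the one place demanding care is checking that all of the arithmetic data of $F'$ — the shift $\kappa\mapsto g+1$, the degree shift $A_1 \mapsto \deg_F+2$, the multiplicative function $\omega=\rho'_F=\rho_{F'}$, and the admissible constant $\L_{F'}$ supplied by Corollary \ref{cor:LF} — are substituted consistently into Theorem \ref{theo:sieve}, and in confirming that it is exactly the extra factor $k$ in $F'$ that forces the sifted variable to be prime rather than merely an integer.
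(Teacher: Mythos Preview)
Your proposal is correct and follows the paper's own proof essentially verbatim: the paper likewise notes that $\rho'_F$ counts the solutions of $F'(k)\equiv 0\pmod p$ with $\rho'_F(p)<p$, feeds the stated parameters into Theorem~\ref{theo:sieve} to majorise $S(\cAA;\cPP,\sqrt y)$, and finishes with the inequality $\mathfrak{G}_F(x,y)\leq S(\cAA;\cPP,\sqrt y)+\max\{n:\exists\,i,\ F_i(n)<\sqrt y\}$. If anything, your write-up is more explicit than the paper's in checking \eqref{eq:R}, \eqref{eq:omega1}, and \eqref{eq:omega2} for the augmented data.
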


\begin{proof}
Clearly, $\rho'_{F}(k)$ is the number of solutions of $F'(k) \equiv 0 \pmod{p}$ and the condition $\rho_{F}(p) < p-1$ implies $\rho'_{F} (p)<p$. So, apply the choices in the statement of the result into Theorem \ref{theo:sieve} to see that $S(\cAA;\cPP,\sqrt{y})$ is majorised by
\begin{equation*}
    \frac{2^{g+1} \Gamma(g+2) y}{\log^{g+1}{y}} \left( 1 + \frac{m_8(y'_0,\lambda)}{\log{y}}\right) \left(1 + \frac{m_9 \log\log{y}}{\log{y}}\right)^{g+1} \prod_p\left( 1-\frac{\rho'_F(p)}{p}\right) \left( 1-\frac{1}{p}\right)^{-g-1} .
\end{equation*}
The result follows by noting
\begin{equation*}
    \mathfrak{G}_F(x,y) \leq S(\cAA;\cPP,\sqrt{y}) + \max\{n:\text{there exists}~ i ~\text{such that} ~ F_i(n)<\sqrt{y}\}. \qedhere
\end{equation*}
\end{proof}


\begin{remark}
If $F_i(x) = \sum_{j=0}^{\deg_{F_i}} a_{i,j} x^{j}$, then we can easily obtain
\begin{equation*}
    \max\{n:\text{exists}~ i ~\text{such that} ~ F_i(n)<\sqrt{y}\}\le \max_i\left\{  y^{\frac{1}{2 (\deg_{F_i}-1)}},  \sum_{j=0}^{\deg_{F_i}-1} \frac{|a_{i,j}|}{a_{i,\deg_{F_i}}}\right\}.
\end{equation*}
To make \eqref{eq:final} more compatible with \eqref{eq:RH}, we also recall \cite[(5.2.6)]{HalberstamRichert}, which gives
\begin{equation}\label{eqn:prod_lower}
    \prod_{p}\left(1-\frac{\omega(p)}{p}\right)\left(1-\frac{1}{p}\right)^{-\kappa}
    \geq \exp\left(-A_1A_2(1+\kappa+A_2)\right) > 0.
\end{equation}
\end{remark}

\section{Results}\label{S:Results}

In the remainder of this paper, we apply Theorem \ref{theo:general} to some choices of $F\in\Z[X]$ and deduce the main results of this paper. In particular, we prove Corollaries \ref{thm:Main} and \ref{thm:Main1}. To make our computations simpler, we will choose $\lambda=2g$ throughout, although this choice does \textit{not} have a great effect, because we expect the minimum function in the definition of $m_5(z,\lambda)$ to evaluate as the term not depending on $\lambda$. 

\subsection{Proof of Corollary \ref{thm:Main}}

Suppose $F \in\Z[X]$ is irreducible with degree $\deg_{F} \geq 1$, leading coefficient $c \geq 1$, and discriminant $D_F$. Note that $\pi_{F}(x) = \mathfrak{F}_{F}(x,x)$, so we can apply Theorem \ref{theo:general} with $x=y$ for sufficiently large $x$ to see $\pi_{F}(x)$ is majorised by
\begin{align*}
    \frac{2 x}{\log{x}} \left(1 + \frac{c_0(x) \log\log{x}}{\log{x}}\right) \prod_p\left( 1-\frac{\rho_{F}(p)}{p}\right)\left( 1-\frac{1}{p}\right)^{-1} \!\!\!+ \underbrace{\max\left\{  x^{\frac{1}{2 (\deg_{F}-1)}},  \sum_{j=0}^{\deg_{F}-1} \frac{|a_{j}|}{a_{\deg_{F}}}\right\}}_{\mathfrak{m}_F(x)},
\end{align*}
in which $a_j$ are the $j$th coefficients of $F$, $x_0 = \sqrt{x} / (\log{x})^{5/2}$, and
\begin{equation*}
    c_0(x) := m_{10}(x_0,2) = m_9 + \frac{m_8(x_0,2)}{\log\log{x}} + \frac{m_8(x_0,2) m_9}{\log{x}} .
\end{equation*}
There exist minimal choices of $b_0 \in (0,10)$ and $b_1 \in\mathbb{N}_{>0}$ such that the conditions in \eqref{eqn:conditions} are satisfied for all $x \geq X := \exp(b_0\cdot 10^{b_1})$, which we can find using computations. It also follows from \eqref{eqn:prod_lower} that for all $x \geq X$,
\begin{align*}
    \mathfrak{m}_F(x)
    &\leq c_1(X) \left( 1+\frac{c_0(X) \log\log{x}}{\log{x}}\right)  \prod_p\left( 1-\frac{\rho_F(p)}{p}\right)\left( 1-\frac{1}{p}\right)^{-1} \frac{x}{\log{x}},
\end{align*}
where $c_1(x) = \exp\left(2 \L_{F} (2 + \L_{F})\right) \mathfrak{m}_F(x) x^{-1} \log{x}$.  It follows that
\begin{align*}
    \pi_{F}(x)
    &< 2\left(1 + \frac{c_1(X)}{2}\right) \left(1 + \frac{c_0(X) \log\log{x}}{\log{x}}\right) \prod_p\left( 1-\frac{\rho_{F}(p)}{p}\right)\left( 1-\frac{1}{p}\right)^{-1} \frac{x}{\log{x}} \\
    &< 2 \left(1 + \frac{c_2(X) \log\log{x}}{\log{x}}\right) \prod_p\left( 1-\frac{\rho_{F}(p)}{p}\right)\left( 1-\frac{1}{p}\right)^{-1} \frac{x}{\log{x}},
\end{align*}
in which
\begin{equation*}
    c_2(X) = c_0(X)\left(1 + \frac{ c_1(X)}{2}\right) + \frac{c_1(X) \log{X}}{2 \log\log{X}} .
\end{equation*}

To obtain the results presented in Corollary \ref{thm:Main}, we make explicit choices of irreducible polynomial, then use computations to determine the minimal choices of $b_0$ and $b_1$ such that the conditions in \eqref{eqn:conditions} hold and values for $c_2(X)$ under these minimal choices. First, choose $F_0(k) = k^2 + 3$, which satisfies $\deg_{F_0} = 2$, $c = 1$, and $|D_{F_0}| = \D_{F_0} = 12$. In this case, computations revealed that the result is complete for all $x\geq X_0$ and $\tau_0 = c_2(X_0)$, where
\begin{equation*}
    X_0 =
    \begin{cases}
        e^{1.5\cdot 10^{38}} &\text{unconditionally,} \\
        e^{5.5\cdot 10^{7}} &\text{if the GRH is true,}
    \end{cases}
    \qquad
    c_2(X_0) =
    \begin{cases}
        e^{2.40829\cdot 10^{25}} &\text{unconditionally,} \\
        e^{1.28266\cdot 10^{5}} &\text{if the GRH is true.}
    \end{cases}
\end{equation*}
Second, we choose $F_1(k) = k^3 - 5$, which satisfies $\deg_{F_1} = 3$, $c = 1$, and $|D_{F_1}| = \D_{F_1} = 675$. In this case, computations revealed that the result is complete for all $x\geq X_1$ and $\tau_1 = c_2(X_1)$, where
\begin{equation*}
    X_1 =
    \begin{cases}
        e^{1.8\cdot 10^{50}} &\text{unconditionally,} \\
        e^{5.7\cdot 10^{8}} &\text{if the GRH is true,}
    \end{cases}
    \qquad
    c_2(X_1) =
    \begin{cases}
        e^{3.00518\cdot 10^{33}} &\text{unconditionally,} \\
        e^{6.72301\cdot 10^{5}} &\text{if the GRH is true.}
    \end{cases}
\end{equation*} 
Third, choose $F_2(k) = k^5 + 3$, which satisfies $\deg_{F_2} = 5$, $c = 1$, and $|D_{F_2}| = \D_{F_2} = 253\,125$. In this case, computations revealed that the result is complete for all $x\geq X_2$ and $\tau_2 = c_2(X_2)$, where
\begin{equation*}
    X_2 =
    \begin{cases}
        e^{6.1\cdot 10^{75}} &\text{unconditionally,} \\
        e^{6.5\cdot 10^{9}} &\text{if the GRH is true,}
    \end{cases}
    \qquad
    c_2(X_2) =
    \begin{cases}
        e^{3.69160\cdot 10^{50}} &\text{unconditionally,} \\
        e^{3.87306\cdot 10^{6}} &\text{if the GRH is true.}
    \end{cases}
\end{equation*}  
Finally, choose $F_3(k) = 2k^6 + 3$, which satisfies $\deg_{F_3} = 5$, $c = 1$, $|D_{F_3}| = 362\,797\,056$, and $\D_{F_3} = 380\,420\,285\,792\,256$. In this case, computations revealed that the result is complete for all $x\geq X_3$ and $\tau_3 = c_2(X_3)$, where
\begin{equation*}
    X_3 =
    \begin{cases}
        e^{2.8\cdot 10^{109}} &\text{unconditionally,} \\
        e^{9.3\cdot 10^{10}} &\text{if the GRH is true,}
    \end{cases}
    \qquad
    c_2(X_3) =
    \begin{cases}
        e^{1.06883\cdot 10^{73}} &\text{unconditionally,} \\
        e^{2.40569\cdot 10^{7}} &\text{if the GRH is true.}
    \end{cases}
\end{equation*}  

\subsection{Proof of Corollary \ref{thm:Main1}}

To prove Corollary \ref{thm:Main1}, we will apply Theorem \ref{theo:generalp} with $F(k) = 2k+1$ (so that $F'(k) = k(2k+1)$). Note that $\rho_F(2) = 0$ and $\rho_F(p) = 1$ for all primes $p > 2$, because there is only one solution modulo $p$ at $k = (p-1)/2$ for $p>2$; this means that the condition $\rho_F(p) < p-1$ is certainly satisfied. Moreover, it follows that $\rho'_F(2) = 1$ and $\rho'_F(p) = 2$ for all primes $p > 2$, so
\begin{equation}\label{eqn:observation_fun}
    \prod_p\left( 1-\frac{\rho'_F(p)}{p}\right)\left( 1-\frac{1}{p}\right)^{-2}
    = 2 \prod_{p>2}\left( 1-\frac{2}{p}\right)\left( 1-\frac{1}{p}\right)^{-2} 
    = 2 \prod_{p>2} \left( 1-\frac{1}{(p-1)^2}\right) . 
\end{equation}
Next, recall that $\mathfrak{G}_F(x,x) = \pi_{F'}(x) = \#\{x-y<p \leq x : p~\text{and}~ 2p+1 ~\text{are prime}\}$. Therefore, if $x$ is large enough, then it follows from Theorem \ref{theo:generalp} and \eqref{eqn:observation_fun} that 
\begin{equation*}
    \pi_{F'}(x)
    \leq 16 \left( 1+\frac{m_8(x'_0,\lambda)}{\log{x}}\right)\left(1 + \frac{m_9 \log\log{x}}{\log{x}}\right)^2  \prod_{p>2} \left( 1-\frac{1}{(p-1)^2}\right) \frac{x}{\log^2{x}} + \sqrt{\frac{x-1}{2}} ,
\end{equation*}
in which $x'_0 = \sqrt{x} / (\log{x})^{9/2}$. We know that $\deg_F = 1$, $c = 2$, and $|D_F| = \D_F = 1$, so we compute that $\log{x} \geq 1.3\cdot 10^{6}$ is large enough. Moreover,
\begin{align*}
    \left( 1+\frac{m_8(x'_0,\lambda)}{\log{x}}\right)\left(1 + \frac{m_9 \log\log{x}}{\log{x}}\right)^2
    &= 1 + \frac{m_{11}(x'_0,\lambda) \log\log{x}}{\log{x}} ,
\end{align*}
in which
\begin{align*}
    m_{11}(y,\lambda) = 2m_9+ \frac{m_9^2\log \log y}{\log y}+\frac{m_8(x'_0,\lambda)}{\log \log y}\left( 1+\frac{2m_9\log \log y}{\log y}+\left(\frac{m_9\log \log y}{\log y}\right)^2\right).
\end{align*}
Recall that Wrench computed the last product in \eqref{eqn:observation_fun} to $42$ decimal places in \cite{Wrench}; it is larger than $0.6601618158$. Therefore, for all $\log{x} \geq 1.3\cdot 10^{6}$, $\log{m_{11}(x'_0,\lambda)} \leq 9.03885\cdot 10^{3}$ and 
\begin{align*}
    \sqrt{\frac{x-1}{2}} 
    < \sqrt{x} 
    &\leq \left(\frac{(1.3\cdot 10^{6})^2}{\sqrt{e^{1.3\cdot 10^{6}}}}\right)\frac{x}{\log^2{x}} \\
    &\leq 2 \left(\frac{(1.3\cdot 10^{6})^2}{2\cdot 0.6601618158 \sqrt{e^{1.3\cdot 10^{6}}}}\right) \prod_{p>2} \left( 1-\frac{1}{(p-1)^2}\right) \frac{x}{\log^2{x}} \\
    &\leq 16 \left(\frac{(1.3\cdot 10^{6})^2}{16\cdot 0.6601618158\sqrt{e^{1.3\cdot 10^{6}}}}\right) \prod_{p>2} \left( 1-\frac{1}{(p-1)^2}\right) \frac{x}{\log^2{x}} \\
    &< 16 e^{-6.49974\cdot 10^{5}} \prod_{p>2} \left( 1-\frac{1}{(p-1)^2}\right) \frac{x}{\log^2{x}} .
\end{align*}
It follows that for all $\log{x} \geq 1.3\cdot 10^{6}$, we have
\begin{equation*}
    \pi_{F'}(x)
    \leq 16 \left(1 + e^{-6.49974\cdot 10^{5}}\right) \left(1 + \frac{e^{9.03885\cdot 10^{3}} \log\log{x}}{\log{x}}\right) \prod_{p>2} \left( 1-\frac{1}{(p-1)^2}\right) \frac{x}{\log^2{x}} .
\end{equation*}

\bibliographystyle{amsplain}
\bibliography{refs}

\end{document}